\newcommand{\ts}{\textsuperscript}
\newcommand{\R}{\mathbb{R}}
\newcommand{\Z}{\mathbb{Z}}
\newcommand{\N}{\mathbb{N}}
\renewcommand{\P}{\mathbb{P}}
\renewcommand{\le}{\leqslant}
\renewcommand{\ge}{\geqslant}
\newcommand{\quotient}[2]{{\raisebox{.2em}{$#1$}\left/\raisebox{-.2em}{$#2$}\right.}}
\theoremstyle{plain}% default
\newtheorem{thm}{Theorem}[section]
\newtheorem{lem}[thm]{Lemma}
\newtheorem{prop}[thm]{Proposition}
\newtheorem{cor}[thm]{Corollary}
\newtheorem{thmR}{Theorem}
\newtheorem{propR}[thmR]{Proposition}
\newtheorem*{rep@theorem}{\rep@title}
\newcommand{\newreptheorem}[2]{%
\newenvironment{rep#1}[1]{%
 \def\rep@title{#2 \ref{##1}}%
 \begin{rep@theorem}}%
 {\end{rep@theorem}}}
\newtheorem*{rep@proposition}{\rep@title}
\newcommand{\newrepproposition}[2]{%
\newenvironment{rep#1}[1]{%
 \def\rep@title{#2 \ref{##1}}%
 \begin{rep@proposition}}%
 {\end{rep@proposition}}}
\newtheorem*{rep@corollary}{\rep@title}
\newcommand{\newrepcorollary}[2]{%
\newenvironment{rep#1}[1]{%
 \def\rep@title{#2 \ref{##1}}%
 \begin{rep@corollary}}%
 {\end{rep@corollary}}}
\newtheorem*{thm*}{Theorem}
\newtheorem*{not*}{Notation}
\theoremstyle{definition}
\newtheorem*{defn*}{Definition}
\newtheorem{defn}[thm]{Definition}
\newtheorem{conj}[thm]{Conjecture}
\theoremstyle{plain}
\newtheorem{rem}[thm]{Remark}
\newtheorem*{remR}{Remark}
\DeclareMathOperator\Supp{supp}
\DeclareMathOperator\FSym{FSym}
\DeclareMathOperator\FAlt{FAlt}
\DeclareMathOperator\Sym{Sym}
\DeclareMathOperator\Aut{Aut}
\DeclareMathOperator\Inn{Inn}
\DeclareMathOperator\Stab{Stab}
\newcommand{\id}{\mathrm{id}}
\begin{document}
\title{A note on the $R_\infty$ property for groups $\FAlt(X)\leqslant G\leqslant\Sym(X)$}
%\title{The $R_\infty$ property and groups $\FAlt(X)\leqslant G\leqslant\Sym(X)$}
\author{Charles Garnet Cox}
\address{Mathematical Sciences, University of Southampton, SO17 1BJ, UK}
\email{cpgcox@gmail.com}
%\urladdr{http://www.southampton.ac.uk/~cc14v07/}
\thanks{}

\subjclass[2010]{Primary: 20E45, 20E36}

\keywords{R infinity property, twisted conjugacy, twisted conjugacy classes, highly transitive groups, infinite torsion groups, Houghton's groups, commensurable groups}

\date{June, 2018}
\begin{abstract}
Given a set $X$, the group $\Sym(X)$ consists of all bijective maps from $X$ to $X$, and $\FSym(X)$ is the subgroup of maps with finite support i.e.\ those that move only finitely many points in $X$. We describe the automorphism structure of groups $\FSym(X)\le G\le \Sym(X)$ and use this to state some conditions on $G$ for it to have the $R_\infty$ property. Our main results are that if $G$ is infinite, torsion, and $\FSym(X)\le G\le \Sym(X)$, then it has the $R_\infty$ property. Also, if $G$ is infinite and residually finite, then there is a set $X$ such that $G$ acts faithfully on $X$ and, using this action, $\langle G, \FSym(X)\rangle$ has the $R_\infty$ property. Finally we have a result for the Houghton groups, which are a family of groups we denote $H_n$, where $n \in \N$. We show that, given any $n\in \N$, any group commensurable to $H_n$ has the $R_\infty$ property.
\end{abstract}
\maketitle
\section{Introduction}
The notion of twisted conjugacy and its relationship to fixed point theory has attracted significant attention. For any group $G$ and any $\phi \in \Aut(G)$, we say that two elements $a, b \in G$ are $\phi$-twisted conjugate (denoted $a\sim_\phi b$) if there exists an $x \in G$ such that
\begin{align}\label{eqntwistedconj}
 (x^{-1})\phi ax = b.
\end{align}
Notice that when $\phi=\id_G$ this becomes the equation for conjugacy. Now, given any $\phi \in \Aut(G)$, define  the Reidemeister number of $\phi$, denoted $R(\phi)$, to be the number of $\phi$-twisted conjugacy classes in $G$. Thus $R(\id_G)$ records the number of conjugacy classes of $G$ and deciding whether this is infinite has been studied for some time (e.g.\ \cite{HNNext} where an infinite group with $R(\id_G)$ finite was constructed). We say that $G$ has the $R_{\infty}$ property if $R(\phi)=\infty$ for every $\phi \in \Aut(G)$.

\begin{not*} For a non-empty set $X$, let $\Sym(X)$ denote the group of all permutations of $X$. Furthermore, let $\FSym(X)$ denote the group of all permutations of $X$ with finite support, and let $\FAlt(X)$ denote the group of all even permutations of $X$ with finite support.
\end{not*}

A first example one may consider for the $R_\infty$ property is $\Z$. Although this has infinitely many conjugacy classes, the only non-trivial automorphism has Reidemeister number 2. Similarly, for any $m \in \N:=\{1, 2, \ldots\}$, the automorphism $\psi$ of $\Z^m$ which sends $a$ to $a^{-1}$ for all $a \in \Z^m$ has Reidemeister number $2^m$. In \cite{rinfinityHou} and \cite{rinfinityHou2} however, the family of Houghton groups, which (for any $n \in \N$) are denoted $H_n$, act on $\{1,\ldots,n\}\times \N=:X_n$, and which lie in the short exact sequence
 $$
1\longrightarrow \FSym(X_n)\stackrel{}{\longrightarrow} H_n\stackrel{}{\longrightarrow} \Z^{n-1}\longrightarrow 1
 $$
were shown to have the $R_\infty$ property. In this note we start with a simpler, more general proof of their theorem, and then develop this in various directions.
\begin{defn} A group $G$ \emph{fully contains} $\FAlt(X)$ if $\FAlt(X)\leqslant G\leqslant \Sym(X)$. Since we only wish to investigate infinite groups, we will always consider $X$ to be infinite. We do not, however, place any other cardinality assumptions on $X$.
\end{defn}
Note that any Houghton group $H_n$ fully contains $\FAlt(X_n)$, but let us justify that this is a large class of groups, using a construction from \cite{osintransitive}. For any infinite group $G$, we have that $G\le\Sym(X)$ for some $X$ (with the possibility that $X=G$ since $G$ can always be embedded into $\Sym(G)$ using a regular representation of $G$). Then $\langle G, \FAlt(X)\rangle$ fully contains $\FAlt(X)$.
\begin{conj} Let $G$ be an infinite group that acts faithfully on a set $X$. Then $\langle G, \FAlt(X)\rangle$ and $\langle G, \FSym(X)\rangle$ both have the $R_\infty$ property.
\end{conj}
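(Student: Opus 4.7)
\emph{Plan.} Write $\Gamma = \langle G, \FAlt(X)\rangle$; the argument for $\langle G, \FSym(X)\rangle$ is identical since both groups fully contain $\FAlt(X)$. The strategy is to reduce the question to a cycle-type computation inside $\Sym(X)$. First, I would use the automorphism-structure results promised in the introduction to pin down every $\phi \in \Aut(\Gamma)$: $\FAlt(X)$ is characteristic in $\Gamma$ (it is the unique minimal non-trivial normal subgroup, because it is simple and its centraliser in $\Sym(X)$ is trivial), so $\phi$ restricts to an automorphism of $\FAlt(X)$; by the classical Schreier--Ulam theorem $\Aut(\FAlt(X)) = \Sym(X)$ this restriction is conjugation by a unique $g = g_\phi \in \Sym(X)$; triviality of the centraliser then forces $\phi(h) = g h g^{-1}$ for every $h \in \Gamma$, and $g$ automatically normalises $\Gamma$.

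Given that $\phi$ is conjugation by $g$, the twisted-conjugacy equation $\phi(x^{-1}) a x = b$ rearranges as
\[
x^{-1}(g^{-1}a)x = g^{-1}b,
\]
so $R(\phi)$ equals the number of $\Gamma$-conjugation orbits on the coset $g^{-1}\Gamma \subseteq \Sym(X)$. Since $\Gamma$-conjugation is the restriction of $\Sym(X)$-conjugation, cycle type in $\Sym(X)$ is a $\Gamma$-invariant on $g^{-1}\Gamma$. The goal becomes: for every $f := g^{-1} \in \Sym(X)$, exhibit an infinite family $(a_n) \subset \FAlt(X) \subseteq \Gamma$ so that the products $f a_n$ have pairwise distinct cycle types.

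I would divide into cases on the structure of $f$. If $\Fix(f)$ is infinite, let $a_n$ be a product of $n$ disjoint $3$-cycles supported in $\Fix(f)$; then $f a_n$ has the cycle structure of $f$ augmented by $n$ extra $3$-cycles, so different $n$ yield different cycle types. If $\Fix(f)$ is finite then $\Supp(f)$ is cofinite, and I subdivide further. If $f$ has an infinite cycle $(\ldots, x_{-1}, x_0, x_1, \ldots)$, set $a_n = \prod_{i=0}^{2n-1}(x_{2i}\, x_{2i+1}) \in \FAlt(X)$; a direct check shows $f a_n$ acquires exactly $2n$ extra fixed points along this cycle while the remainder stays a single infinite cycle, so fixed-point counts distinguish the $f a_n$. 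If instead $f$ has only finite cycles then it has infinitely many of them, and a suitable product $a_k \in \FAlt(X)$ of transpositions, each linking points from two distinct cycles of $f$, merges cycle-pairs into longer cycles; varying $k$ changes the multiplicities of the resulting cycle lengths and so gives distinct cycle types.

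\emph{Main obstacle.} The delicate case is when $\Supp(f)$ is cofinite: multiplication by a single transposition splits or merges cycles of $f$ according to whether its two points lie in the same or different cycles of $f$, and the $a_n$ must be assembled so as to yield pairwise distinct cycle types while each $a_n$ remains an even permutation. Writing down a single construction that works uniformly across every possible cycle structure of $f$ (infinite cycles, finite cycles of unbounded length, finite cycles of bounded length, and mixtures thereof) is the technical crux, and it is presumably the absence of such a uniform recipe that has kept the statement as a conjecture rather than a theorem.
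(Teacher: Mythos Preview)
The statement is a \emph{conjecture} in the paper, not a theorem; the paper only establishes it under extra hypotheses (torsion, or no element with an infinite orbit, or residually finite with a particular action). Your closing paragraph acknowledges this, and your overall strategy --- identify $\Aut(\Gamma)$ with $N_{\Sym(X)}(\Gamma)$, rewrite $\phi_\rho$-twisted conjugacy as $\Gamma$-conjugacy on the coset $\rho\Gamma$, then separate classes by cycle type --- is exactly the paper's.

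That said, your case analysis has a gap even in the range the paper does cover. In your Case~1 ($\Fix(f)$ infinite) you add $n$ disjoint $3$-cycles inside $\Fix(f)$ and claim the resulting cycle types are distinct. This fails when $f$ already has infinitely many $3$-cycles: then $\eta_3(fa_n)=\infty$ for every $n$ and the invariant does not move. The paper's Proposition~\ref{firstresult} repairs this by first choosing the \emph{smallest} $s$ with $\eta_s(\rho)<\infty$ and then either adding $s$-cycles in $\Fix(\rho)$ (if $s>1$) or, when $s=1$, increasing the number of fixed points along an infinite cycle or by inverting finite cycles of $\rho$. Your Cases~2 and~3 correspond to the paper's subcases for $s=1$, but they should be framed as subordinate to the hypothesis ``some $\eta_s(\rho)$ is finite'' rather than to ``$\Fix(f)$ is finite''.

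The genuine obstruction, which your final paragraph gestures at but does not quite isolate, is the case $\eta_r(\rho)=\infty$ for \emph{every} $r\in\N$. Here any element of $\FAlt(X)$ leaves the cycle type of $\rho$ unchanged, so cycle type is useless as an invariant. The paper's only tool in this regime is Proposition~\ref{prop-conjcond}: if $\Supp(\rho b_i)\subsetneq\Supp(\rho b_j)$ then any conjugator in $\Sym(X)$ must have an infinite orbit, so when no element of $\Gamma$ has an infinite orbit the $\rho b_k$ are pairwise non-conjugate \emph{in $\Gamma$} even though they have identical cycle type. This is a finer invariant than cycle type and is what drives Corollary~\ref{prop-infinitecycleneeded} and the torsion and residually-finite results. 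Once $\Gamma$ contains an element with an infinite orbit and $\rho$ has infinitely many cycles of every length, neither argument applies, and this is precisely where the conjecture remains open.
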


We make some progress with this conjecture. We first confirm it for the case where $G$ is torsion i.e.\ we show that, for any infinite set $X$ and any embedding of $\Psi: G\hookrightarrow\Sym(X)$, $\langle (G)\Psi, \FAlt(X)\rangle$ has the $R_\infty$ property for any torsion group $G$. We then use this work to show that if $G$ is an infinite residually finite group, then there is a set $X$ on which $G$ acts faithfully and, using this action, $\langle G, \FAlt(X)\rangle$ has the $R_\infty$ property. We end by showing that, if $G$ is commensurable to a Houghton group $H_n$ (where $n\in\N$) then $G$ has the $R_\infty$ property.

Let us now describe these results more precisely, and better indicate the path that the paper takes. We start by describing the automorphism group for groups fully containing $\FAlt(X)$, so to approach twisted conjugacy.

\begin{defn*}  A group $G$ is monolithic if it has a non-trivial normal subgroup that is contained in every non-trivial normal subgroup of $G$ i.e.\ if it has a minimal non-trivial normal subgroup.
\end{defn*}
Let $N_{\Sym(X)}(G):=\{\rho \in \Sym(X)\mid\rho^{-1}g\rho\in G\;\text{for every}\;g\in G\}$, the normaliser of $G$ over $\Sym(X)$.
\begin{propR} (Lem. \ref{lem-autresult}, Prop. \ref{prop-faltchar}, Rem. \ref{FSymchar}). Let $G$ fully contain $\FAlt(X)$, where $X$ is infinite. Then $\FAlt(X)$ is characteristic in $G$, $\Aut(G)\cong N_{\Sym(X)}(G)$, and $G$ is monolithic. Moreover, since elements of $\Aut(G)$ preserve the cycle type of elements of $G$, if $\FSym(X)\le G$, then $\FSym(X)$ is characteristic in $G$.
\end{propR}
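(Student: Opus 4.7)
The plan is to tackle the four conclusions in order: monolithicity together with $\FAlt(X)$ being characteristic; the isomorphism $\Aut(G)\cong N_{\Sym(X)}(G)$; and finally $\FSym(X)$ being characteristic when it lies in $G$.

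For the first block, I would argue that $\FAlt(X)$ is the \emph{unique} minimal non-trivial normal subgroup of $G$. The classical fact is that $\FAlt(X)$ is simple for infinite $X$, so any normal subgroup $N\trianglelefteq G$ meets $\FAlt(X)$ in either $\{1\}$ or the whole of $\FAlt(X)$. In the first case, $[N,\FAlt(X)]\le N\cap\FAlt(X)=\{1\}$, placing $N$ inside $C_{\Sym(X)}(\FAlt(X))$; a brief check with a well-chosen $3$-cycle shows this centraliser is trivial, so $N=\{1\}$. Otherwise $\FAlt(X)\le N$. This simultaneously shows that $G$ is monolithic and that $\FAlt(X)$, being the only minimal non-trivial normal subgroup, is characteristic.

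For the isomorphism, the natural conjugation map $\Phi\colon N_{\Sym(X)}(G)\to\Aut(G)$ is injective because its kernel lies in $C_{\Sym(X)}(\FAlt(X))=\{1\}$. Surjectivity is the crux. Given $\phi\in\Aut(G)$, restrict to $\FAlt(X)$ (permitted by the previous step) and invoke the classical Schreier--Ulam-type theorem that every automorphism of $\FAlt(X)$, for infinite $X$, is induced by conjugation by some $\rho\in\Sym(X)$. One then shows this same $\rho$ implements $\phi$ on all of $G$: for any $g\in G$ and $a\in\FAlt(X)$, comparing $\phi(gag^{-1})$ computed in two ways forces $g^{-1}\rho\phi(g)\rho^{-1}$ to centralise $\FAlt(X)$, hence to be trivial, so $\phi(g)=\rho^{-1}g\rho$. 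Since $\phi(G)=G$, we deduce $\rho\in N_{\Sym(X)}(G)$.

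The claim about $\FSym(X)$ then follows immediately: by the isomorphism just established, every $\phi\in\Aut(G)$ is conjugation by some $\rho\in\Sym(X)$, and conjugation preserves cycle type and in particular finiteness of support, so $\FSym(X)=\{g\in G:g\text{ has finite support}\}$ is invariant. The step I expect to require the most care is the surjectivity argument above, as it rests both on citing the correct form of the automorphism theorem for $\FAlt(X)$ at arbitrary infinite cardinality and on verifying that the $\rho$ obtained from $\FAlt(X)$ extends consistently and uniquely to all of $G$.
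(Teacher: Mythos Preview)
Your proposal is correct and follows the same overall architecture as the paper (monolithicity and $\FAlt(X)$ characteristic first, then the isomorphism $\Aut(G)\cong N_{\Sym(X)}(G)$, then $\FSym(X)$ characteristic via cycle-type preservation). The one noteworthy difference is in how you establish that every non-trivial normal $N\trianglelefteq G$ contains $\FAlt(X)$. The paper argues by an explicit case split on the cycle structure of a chosen $g\in N\setminus\{1\}$ (finite support, infinitely many finite cycles, or an infinite cycle), in each case exhibiting a concrete $\sigma\in\FAlt(X)$ with $[\sigma,g]\in\FAlt(X)\setminus\{1\}$. Your route is slicker: from $N\cap\FAlt(X)=\{1\}$ and both subgroups being normal you get $[N,\FAlt(X)]\le N\cap\FAlt(X)=\{1\}$, placing $N$ inside $C_{\Sym(X)}(\FAlt(X))=\{1\}$. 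This avoids the case analysis entirely; on the other hand, the paper's explicit commutator computations have independent value later, since similar hands-on cycle manipulations recur throughout the paper. For the isomorphism $\Aut(G)\cong N_{\Sym(X)}(G)$ the paper simply cites \cite[Cor.~3.3]{rinfinityHou2} adapted to $3$-cycles, whereas you spell out the standard argument (restrict to $\FAlt(X)$, invoke $\Aut(\FAlt(X))\cong\Sym(X)$, then extend via the centraliser trick); these amount to the same thing.
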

The following well known lemma implies that if $G$ is a group with the $R_\infty$ property and $G$ acts faithfully on a set $X$, then $\langle G, \FAlt(X)\rangle$ and $\langle G, \FSym(X)\rangle$ have the $R_\infty$ property (since Proposition 1 states that $\FAlt(X)$ and $\FSym(X)$ are characteristic in $\langle G, \FAlt(X)\rangle$).

\begin{lem}\cite[Lem 2.1]{rinfinityextensions}\label{lem-finiteextensions} For any short exact sequence of groups
 $$
1\longrightarrow D \longrightarrow  E \longrightarrow F \longrightarrow 1
 $$
if $D$ is characteristic in $E$ and $F$ has the $R_{\infty}$ property, then $E$ has the $R_{\infty}$ property.
\end{lem}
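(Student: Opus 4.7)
The plan is to fix an arbitrary $\phi \in \Aut(E)$ and prove $R(\phi)=\infty$ by comparing $\phi$-twisted conjugacy in $E$ with $\bar\phi$-twisted conjugacy in $F$ for a suitably induced automorphism $\bar\phi\in\Aut(F)$. Since $D$ is characteristic in $E$, we have $(D)\phi=D$, so $\phi$ restricts to an automorphism of $D$ and descends, via the quotient map $\pi\colon E\twoheadrightarrow F$, to a well-defined automorphism $\bar\phi\in\Aut(F)$ satisfying $\pi\circ\phi = \bar\phi\circ\pi$ (this is the one place the characteristic hypothesis is used).

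Next I would construct a map $\Phi$ from the set of $\phi$-twisted conjugacy classes of $E$ to the set of $\bar\phi$-twisted conjugacy classes of $F$ by sending $[a]_\phi \mapsto [(a)\pi]_{\bar\phi}$. To check this is well-defined, suppose $a\sim_\phi b$ in $E$, so $(x^{-1})\phi\,a\,x = b$ for some $x\in E$. Applying $\pi$ and using the intertwining identity $\pi\phi=\bar\phi\pi$ gives $((x)\pi^{-1})\bar\phi\cdot (a)\pi\cdot (x)\pi = (b)\pi$, so $(a)\pi \sim_{\bar\phi} (b)\pi$. The map $\Phi$ is surjective because $\pi$ itself is surjective: any class $[c]_{\bar\phi}$ in $F$ has a preimage $a\in E$ with $(a)\pi = c$, and then $\Phi([a]_\phi)=[c]_{\bar\phi}$.

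Surjectivity immediately gives $R(\phi)\ge R(\bar\phi)$. Since $F$ has the $R_\infty$ property, $R(\bar\phi)=\infty$, hence $R(\phi)=\infty$. As $\phi\in\Aut(E)$ was arbitrary, $E$ has the $R_\infty$ property.

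There is really no genuine obstacle here; the only thing to be careful about is consistency with the paper's right-action convention for $\phi$ in equation (\ref{eqntwistedconj}), so that when one applies $\pi$ to the twisted conjugacy relation, the homomorphism identity $\pi\phi=\bar\phi\pi$ is used in the correct direction. After that the argument is a routine diagram chase, which is why the result is a standard lemma.
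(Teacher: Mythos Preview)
Your argument is correct and is the standard proof of this well-known lemma: use that $D$ is characteristic to induce $\bar\phi\in\Aut(F)$, observe that $\pi$ carries $\phi$-twisted conjugacy classes onto $\bar\phi$-twisted conjugacy classes, and conclude $R(\phi)\ge R(\bar\phi)=\infty$. There is nothing to compare against here, since the paper does not supply its own proof of this statement; it simply quotes it from \cite[Lem.~2.1]{rinfinityextensions}.
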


We then work with arguments using cycle type (using that the conjugacy classes of $\Sym(X)$ are well know: each consists of all elements of the same cycle type).
\begin{defn*} Let $g \in \Sym(X)$. Then an orbit of $g$ is $\{xg^d\mid d \in \Z\}$ where $x \in X$. Also, $g$ has an infinite orbit if there is a $y \in X$ such that $\{yg^d\mid d \in \Z\}$ is infinite.
\end{defn*}
\begin{repproposition}{firstresult} Let $G$ fully contain $\FAlt(X)$, where $X$ is an infinite set. If for every $\rho \in N_{\Sym(X)}(G)$ there is an $s \in \N$ such that $\rho$ has finitely many orbits of size $s$, then $G$ has the $R_\infty$ property.
\end{repproposition}
From the structure of $\Aut(H_n)$, where $H_n$ denotes the $n$\ts{th} Houghton group, Proposition \ref{firstresult} immediately yields that, for any $n \ge2$, $H_n$ has the $R_\infty$ property.

\begin{repcorollary}{prop-infinitecycleneeded} Let $G$ fully contain $\FAlt(X)$, where $X$ is an infinite set. If for every $g \in G$, $g$ does not have an infinite orbit, then $G$ has the $R_\infty$ property.
\end{repcorollary}
Clearly torsion groups satisfy Corollary \ref{prop-infinitecycleneeded}.
\begin{repcorollary}{cor-torsion} Let $G$ be an infinite torsion group which fully contains $\FAlt(X)$. Then $G$ has the $R_\infty$ property. 
\end{repcorollary}
This means that any torsion group $T$ can be embedded into an infinite torsion group (of any cardinality greater than or equal to $|T|$) which has the $R_\infty$ property. It is in fact easy to construct an uncountable family of such groups.
\begin{repcorollary}{cor-countabletorsion} There exist uncountably many countable torsion groups which have the $R_\infty$ property.
\end{repcorollary}
This result can be strengthened by using an already known family of countable, finitely generated, torsion groups.
\begin{repcorollary}{cor-countabletorsionfg} There exist uncountably many finitely generated torsion groups which have the $R_\infty$ property.
\end{repcorollary}
Residually finite groups are exactly those that have a faithful action on their finite quotients. This action therefore only has finite orbits, meaning that Corollary \ref{prop-infinitecycleneeded} applies.
\begin{repcorollary}{cor-residuallyfinite} Let $G$ be an infinite residually finite group, and $X$ be the union of the finite quotients of $G$. Then, using this action, $\langle G, \FAlt(X)\rangle$ has the $R_\infty$ property.
\end{repcorollary}

A few conventions will be used throughout this note:
\begin{enumerate}[i)]
\item we shall always work with right actions;
\item unless specified, $X$ will refer to an infinite set;
\item we shall always consider elements from $\Sym(X)$ to be written in disjoint cycle notation;
\item for all of the results in this note, the same proofs can be used if $\FAlt$ is replaced with $\FSym$.
\end{enumerate}

\begin{remR} Let $g \in \Sym(X)$. We shall say `a cycle of $g$' to refer, for some $x \in X$, to an orbit $\{xg^d\mid d \in \Z\}$. If there is an $x \in X$ such that this set is infinite, then this is an infinite cycle of $g$ and $g$ contains an infinite cycle. If there is an $x \in X$ such that this set has cardinality $r$, then this is an $r$-cycle of $g$ and $g$ contains an $r$-cycle. If, for some $s \in \N$, there are only finitely many $x \in X$ such that $|\{xg^d\mid d \in \Z\}|=s$, then we shall say that $g$ has finitely many $s$-cycles. Similarly $g$ may have finitely many infinite cycles.
\end{remR}
\noindent\textbf{Acknowledgements.} I thank the authors of \cite{rinfinityHou} and \cite{rinfinityHou2}, whose papers drew my attention to the $R_\infty$ property. I thank my supervisor Armando Martino for his guidance and encouragement. I thank Hector Durham, of the University of Southampton, for the numerous interesting discussions, especially those regarding monolithic groups. I thank Motiejus Valiunas, also of the University of Southampton, for his suggestion to choose $X$ as the union of finite quotients for an action with only finite orbits for residually finite groups, the main ingredient of Corollary \ref{cor-residuallyfinite}. Finally I thank the referee for their useful comments.

\section{Preliminary observations}
The groups $\FAlt(X)$, $\FSym(X)$, and $\Sym(X)$ often arise when considering permutation groups (see, for example, \cite{cameron} and \cite{Permutationgroups}). Note that any countable group can be considered as a subgroup of $\Sym(X)$ where $X$ is countable (for example set $X:=G$ and use the regular representation of $G$). Let us start by recalling some elementary observations about $\FAlt$.

\begin{lem} If $G$ fully contains $\FAlt(X)$ where $X$ is infinite, then $G$ is centreless.
\end{lem}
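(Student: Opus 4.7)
The plan is to show that an arbitrary nontrivial element $g\in G$ fails to commute with some element of $\FAlt(X)\le G$, so that $Z(G)$ must be trivial. In other words, it suffices to verify that the centraliser of $\FAlt(X)$ in $\Sym(X)$ is trivial, and then the containment $\FAlt(X)\le G$ forces $Z(G)\le C_{\Sym(X)}(\FAlt(X))=\{1\}$.

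Concretely, suppose $g\in G$ is not the identity. Then there is some $x\in X$ with $xg=y\ne x$. Since $X$ is infinite, I can pick two further distinct points $a,b\in X\setminus\{x,y\}$. Form the $3$-cycle $\sigma=(y\ a\ b)$, which lies in $\FAlt(X)\le G$ and, crucially, fixes $x$. Using the right-action convention, I would now compute the two composites at $x$:
\begin{align*}
 x(g\sigma) &= (xg)\sigma = y\sigma = a,\\
 x(\sigma g) &= (x\sigma)g = xg = y.
\end{align*}
Since $a\ne y$, the elements $g\sigma$ and $\sigma g$ differ at $x$, so $g$ does not commute with $\sigma\in G$. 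Hence $g\notin Z(G)$, and $Z(G)=\{1\}$.

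There is no real obstacle: the argument is entirely elementary, and the only place infiniteness of $X$ is used is to guarantee the existence of the two auxiliary points $a,b$ outside $\{x,y\}$ needed to build a $3$-cycle in $\FAlt(X)$ that moves $y$ but fixes $x$. If one wished to write the proof for $\FSym$ in place of $\FAlt$, a transposition $(y\ a)$ would suffice in place of the $3$-cycle, which fits the convention noted in the introduction that the same proofs go through with $\FAlt$ replaced by $\FSym$.
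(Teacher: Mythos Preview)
Your proof is correct. The paper actually states this lemma without proof, treating it as an elementary observation; your argument via the centraliser of $\FAlt(X)$ in $\Sym(X)$ is the standard one and fills that gap cleanly.
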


\begin{lem}\label{faltgenset} For any infinite set $X$, $\FAlt(X)$ is generated by $S$, where $S$ is the set of all 3-cycles with support in $X$.
\end{lem}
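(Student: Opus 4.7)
The plan is to reduce this to the classical fact that the finite alternating group $A_n$ is generated by its 3-cycles for $n \ge 3$. First I note the easy inclusion $S \subseteq \FAlt(X)$: any 3-cycle with support in $X$ has finite support and is an even permutation, hence lies in $\FAlt(X)$. For the reverse inclusion, take an arbitrary $g \in \FAlt(X)$. Since $\Supp(g)$ is finite and $X$ is infinite, I can choose a finite subset $Y \subseteq X$ with $\Supp(g) \subseteq Y$ and $|Y| \ge 3$. Then $g$ fixes $X \setminus Y$ pointwise and restricts to an even permutation of $Y$, so $g$ lies in the natural copy of $\Alt(Y)$ sitting inside $\Sym(X)$.

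It now suffices to show that $\Alt(Y)$ is generated by the 3-cycles supported in $Y$, since each such 3-cycle belongs to $S$. This is the standard finite statement, and I would recall its proof in one line: write $g$ as a product of transpositions; because $g$ is even, the number of transpositions is even, so I can pair them up, and each pair is a product of (at most) two 3-cycles via identities of the form $(ab)(bc) = (abc)$ when the transpositions share a point and $(ab)(cd) = (abc)(acd)^{\pm 1}$ when they are disjoint (the exact form depending on whether one reads products left-to-right or right-to-left, which is harmless here). Assembling these pieces expresses $g$ as a product of elements of $S$, completing the proof.

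There is no real obstacle: once the observation is made that $\FAlt(X) = \bigcup_{Y \subseteq X \text{ finite}} \Alt(Y)$, the lemma is an immediate transfer of the finite-alternating-group fact. The only thing to be slightly careful about is ensuring $|Y| \ge 3$ so that $\Alt(Y)$ actually contains 3-cycles, which is always possible because $X$ is infinite.
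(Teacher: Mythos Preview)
Your proof is correct and follows essentially the same route as the paper's: both write an arbitrary element of $\FAlt(X)$ as a product of an even number of transpositions and then rewrite each consecutive pair of transpositions as a product of $3$-cycles (the paper does this directly, while you first localise to a finite $\Alt(Y)$, but the combinatorics are identical). The only cosmetic difference is that you phrase the reduction via $\FAlt(X)=\bigcup_{Y\text{ finite}}\Alt(Y)$ before invoking the pair-of-transpositions identities.
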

\begin{proof} Using $S$ we can produce any element which is a product of two 2-cycles (for example choose $(a_1\;a_2)(a_2\;b_1)$ and $(a_2\;b_1)(b_1\;b_2)$ whose product is $(a_1\;a_2)(b_1\;b_2)$). Now, given an element $\sigma \in \FAlt(X)$, write $\sigma$ as a product of 2-cycles. By definition this product will consist of an even number  of 2-cycles. Now, each pair of 2-cycles will either be: trivial; a 3-cycle; or a product of two 2-cycles.
\end{proof}
For any infinite set $X$ we therefore have that $|X|=|S|=|\FAlt(X)|$. Moreover $\FAlt(X)$ is an index 2 subgroup of $\FSym(X)$, and so for any infinite set $X$ we also have that $|X|=|\FSym(X)|$.
\begin{lem} For any infinite set $X$, $\FAlt(X)$ is simple.
\end{lem}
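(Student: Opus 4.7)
The plan is to reduce the simplicity of $\FAlt(X)$ for infinite $X$ to the classical simplicity of the finite alternating groups $A_n$ for $n \geqslant 5$, using that every element of $\FAlt(X)$ has finite support. I take a non-trivial normal subgroup $N \trianglelefteq \FAlt(X)$ and aim to show $N = \FAlt(X)$; by Lemma \ref{faltgenset} it suffices to show that every 3-cycle lies in $N$.

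First I would pick a non-identity $\sigma \in N$, and let $F = \Supp(\sigma)$, a finite subset of $X$. Since $X$ is infinite, I can enlarge $F$ to a finite set $F' \subseteq X$ with $|F'| \geqslant 5$ still containing $\Supp(\sigma)$. Identifying $\FAlt(F')$ (the even permutations of $X$ supported in $F'$) with $A_{|F'|}$, this sits as a subgroup of $\FAlt(X)$. The key observation is that $N \cap \FAlt(F')$ is a normal subgroup of $\FAlt(F')$: for any $\tau \in \FAlt(F')$, the conjugate $\tau^{-1}\sigma\tau$ lies in $N$ (by normality in $\FAlt(X)$) and is supported in $F'$. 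Since $\sigma$ provides a non-trivial element and $\FAlt(F') \cong A_{|F'|}$ is simple, we conclude $\FAlt(F') \subseteq N$.

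Now, given any 3-cycle $c$ with support in $X$, I enlarge further to a finite set $F'' \supseteq F' \cup \Supp(c)$ with $|F''| \geqslant 5$. Repeating the argument, $N \cap \FAlt(F'')$ is normal in $\FAlt(F'')$ and contains $\FAlt(F')$, hence is non-trivial; by simplicity of $\FAlt(F'') \cong A_{|F''|}$, it equals all of $\FAlt(F'')$, which contains $c$. Hence every 3-cycle lies in $N$, and Lemma \ref{faltgenset} gives $N = \FAlt(X)$.

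The main obstacle is conceptual rather than computational: one must ensure the normality argument transfers cleanly to the finite-support setting, i.e.\ that conjugation by elements of $\FAlt(F')$ does not drag $\sigma$ out of $\FAlt(F')$, and that one can always enlarge $F'$ to accommodate an arbitrary 3-cycle. Both points are immediate once one notes that support is preserved under conjugation by permutations with support in the enclosing set. The proof then rests entirely on the known simplicity of $A_n$ for $n \geqslant 5$.
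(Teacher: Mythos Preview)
Your proof is correct and follows essentially the same approach as the paper: both reduce to the simplicity of the finite alternating groups $A_n$ for $n\geqslant 5$ by intersecting $N$ with a suitable finite $\FAlt(F')$, and then invoke Lemma~\ref{faltgenset}. The only cosmetic difference is that the paper, after obtaining one 3-cycle in $N$, appeals directly to Lemma~\ref{faltgenset} (implicitly using that all 3-cycles are conjugate in $\FAlt(X)$), whereas you run the finite-subgroup argument a second time with an enlarged $F''$ to capture an arbitrary 3-cycle; both routes are equally valid.
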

\begin{proof}
Assume that $1\ne\sigma \in N$, a non-trivial normal subgroup of $\FAlt(X)$. Then $\sigma\in A_n$ where $n\ge 5$. But $A_n\cap N\unlhd A_n$, and (since $N$ is non-trivial and $A_n$ is simple for $n\ge 5$) we have that $N\cap A_n=A_n$. Thus $N$ contains a 3-cycle and so $N=\FAlt(X)$ by the previous lemma.
\end{proof}
Note that no infinite simple group can be residually finite, and so if $G$ fully contains $\FAlt(X)$, then $G$ is not residually finite. Also, given any infinite set $X$, any group $G$ fully containing $\FAlt(X)$ will have $\FAlt(X)$ as a normal subgroup. Thus, unless $G=\FAlt(X)$, $G$ will not be simple. 

\begin{not*}
Let $G\leqslant \Sym(X)$. For any given $\rho \in N_{\Sym(X)}(G)$, let $\phi_\rho$ denote the automorphism of $G$ induced by conjugation by $\rho$ i.e.\ $(g)\phi_\rho:=\rho^{-1}g\rho$ for all $g \in G$.
\end{not*}
The three groups $\FAlt(X), \FSym(X), \Sym(X)$ have the property that
\begin{align}\label{autnorm}
N_{\Sym(X)}(G)\rightarrow \Aut(G), \rho\mapsto \phi_\rho\;\textrm{is an isomorphism.}
\end{align}
This means that $\Aut(\FAlt(X))\cong N_{\Sym(X)}(\FAlt(X))=\Sym(X)\cong \Aut(\FSym(X))$ and that $\FAlt(X)$ is characteristic in $\FSym(X)$ which is characteristic in $\Sym(X)$. Our first aim is to show that any group $G$ fully containing $\FAlt(X)$ satisfies (\ref{autnorm}). We do this by showing that $\FAlt(X)$ is characteristic in such a $G$ and then apply the following lemma.
\begin{lem}\label{lem-autresult} Let $G\leqslant \Sym(X)$ and $\FAlt(X)$ be a characteristic subgroup of $G$. Then $N_{\Sym(X)}(G)\cong_\Psi \Aut(G)$ where $\Psi: \rho\mapsto \phi_\rho$.
\end{lem}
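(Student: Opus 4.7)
The plan is to verify that $\Psi \colon \rho \mapsto \phi_\rho$ is a well-defined group homomorphism, then check injectivity and surjectivity separately. Well-definedness is immediate from the definition of $N_{\Sym(X)}(G)$, and the homomorphism property follows from a direct computation: with the right-action convention $(g)\phi_\rho = \rho^{-1}g\rho$, one has $(g)(\phi_{\rho_1}\phi_{\rho_2}) = \rho_2^{-1}\rho_1^{-1}g\rho_1\rho_2 = (g)\phi_{\rho_1\rho_2}$.

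The key preliminary fact I would establish is that the centraliser $C_{\Sym(X)}(\FAlt(X))$ is trivial. Given $\rho$ commuting with every $\sigma \in \FAlt(X)$, suppose $x\rho = y$ with $y \neq x$. Since $X$ is infinite, pick $a, b \in X \setminus \{x, y\}$ distinct and set $\sigma = (x\;a\;b) \in \FAlt(X)$. Then $(x\sigma)\rho = a\rho$ while $(x\rho)\sigma = y\sigma = y = x\rho$, forcing $a\rho = x\rho$, a contradiction. Hence $\rho$ fixes every point, so $\rho = \id$. This immediately handles injectivity of $\Psi$: if $\phi_\rho = \id_G$, then $\rho$ centralises $G$, hence centralises $\FAlt(X)$, hence is trivial.

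Surjectivity is the step I expect to be the main obstacle, and it is where the hypothesis that $\FAlt(X)$ is characteristic in $G$ is used essentially. Given $\phi \in \Aut(G)$, the restriction $\phi|_{\FAlt(X)}$ is an automorphism of $\FAlt(X)$. By the already-noted isomorphism $N_{\Sym(X)}(\FAlt(X)) = \Sym(X) \cong \Aut(\FAlt(X))$, there exists a unique $\rho \in \Sym(X)$ such that $(\sigma)\phi = \rho^{-1}\sigma\rho$ for every $\sigma \in \FAlt(X)$. I would then show $\phi = \phi_\rho$ on all of $G$ as follows. For $g \in G$ and $\sigma \in \FAlt(X)$, normality of $\FAlt(X)$ in $G$ gives $g^{-1}\sigma g \in \FAlt(X)$; applying $\phi$ and using how it acts on $\FAlt(X)$ yields
\[
(g\phi)^{-1}\rho^{-1}\sigma\rho (g\phi) = \rho^{-1}g^{-1}\sigma g \rho.
\]
Setting $\alpha := \rho(g\phi)\rho^{-1}$ and rearranging gives $\alpha^{-1}\sigma\alpha = g^{-1}\sigma g$ for every $\sigma \in \FAlt(X)$, so $g\alpha^{-1}$ centralises $\FAlt(X)$. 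By the preliminary fact, $g\alpha^{-1} = \id$, whence $g\phi = \rho^{-1}g\rho$.

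Finally, this last identity gives $\rho^{-1}G\rho \subseteq G$, and applying the same identity to $g\phi^{-1}$ in place of $g$ yields the reverse inclusion, so $\rho \in N_{\Sym(X)}(G)$ and $\Psi(\rho) = \phi$. The hard part is really the trivial-centraliser lemma, since both injectivity and the upgrade of the restricted agreement $\phi|_{\FAlt(X)} = \phi_\rho|_{\FAlt(X)}$ to agreement on all of $G$ rest on it; everything else is bookkeeping.
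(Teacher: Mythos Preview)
Your argument is correct. The paper's own proof is a one-line deferral to \cite[Cor.~3.3]{rinfinityHou2}, with the instruction to replace $2$-cycles by $3$-cycles; that referenced argument builds the permutation $\rho$ directly from how $\phi$ acts on the generating cycles of the finitary subgroup and then checks agreement on all of $G$. Your route is slightly different in packaging: rather than constructing $\rho$ by hand from $3$-cycles, you invoke the already-stated fact that $\Aut(\FAlt(X))\cong\Sym(X)$ as a black box to obtain $\rho$, and then use the trivial-centraliser observation $C_{\Sym(X)}(\FAlt(X))=1$ to upgrade agreement on $\FAlt(X)$ to agreement on $G$. The substance is the same---both hinge on the centraliser being trivial and on $\FAlt(X)$ being characteristic---but your version is more modular (it isolates the centraliser lemma cleanly and reuses the known $\Aut(\FAlt(X))$ result), while the referenced approach is more self-contained in that it does not presuppose the $\Aut(\FAlt(X))\cong\Sym(X)$ identification. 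Your final step verifying $\rho\in N_{\Sym(X)}(G)$ by applying the identity to $g\phi^{-1}$ is a nice touch that the one-line reference glosses over.
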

\begin{proof} Running the proof of \cite[Cor. 3.3]{rinfinityHou2} using 3-cycles rather than 2-cycles yields the result. 
\end{proof}
For any group $G$ satisfying (\ref{autnorm}), we may use the following reformulation of twisted conjugacy, which has been used extensively by many authors working with the $R_\infty$ property. Recall that $\phi_\rho$ denotes the automorphism induced by conjugation by $\rho \in \Sym(X)$. Thus,
\begin{align}\label{eqnmanipulatetwistedconj}
 (x^{-1})\phi_\rho ax = b\Rightarrow \rho^{-1}(x^{-1})\rho ax = b \Rightarrow x^{-1}\rho ax = \rho b.
\end{align}
We may then show that $R(\phi_\rho)=\infty$ by finding a set of elements $\{a_k \in G\mid k \in \N\}$ such that
\begin{align}\label{eqnequivalenttwistedconj}
 \rho a_i \sim_G \rho a_j \Leftrightarrow i = j.
\end{align}

This is because, if such a set of elements exist, then each $a_k$ lies in a distinct $\phi_\rho$-twisted conjugacy class, and so $R(\phi_\rho)=\infty$. Thus showing, for each $\rho \in N_{\Sym(X)}(G)$, that there exists a set of elements $\{a_k \in G\mid k \in \N\}$ where (\ref{eqnequivalenttwistedconj}) holds is sufficient to show that $G$ has the $R_\infty$ property.

\begin{prop}\label{prop-faltchar} If $G$ fully contains $\FAlt(X)$, then $\FAlt(X)$ is a unique minimal normal subgroup of $G$. Moreover $\FAlt(X)$ is a characteristic subgroup of $G$.
\end{prop}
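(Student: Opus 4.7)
The plan is to prove both claims at once by first establishing that $\FAlt(X)$ is the unique minimal non-trivial normal subgroup of $G$, and then to derive the characteristic property from uniqueness, since any automorphism of $G$ must permute minimal normal subgroups.

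First I would observe that $\FAlt(X)$ is normal in $G$ (it is even normal in $\Sym(X)$), and that it is a simple group by the lemma already established in the excerpt. Now let $N$ be any non-trivial normal subgroup of $G$. I intersect with $\FAlt(X)$: since $N \cap \FAlt(X)$ is normal in $\FAlt(X)$ and $\FAlt(X)$ is simple, either $N \cap \FAlt(X) = \FAlt(X)$, in which case $\FAlt(X) \leqslant N$ and we are done, or $N \cap \FAlt(X) = 1$. In the latter case, for any $n \in N$ and $f \in \FAlt(X)$, the commutator $[n,f] = n^{-1}f^{-1}nf$ lies in $\FAlt(X)$ (because $\FAlt(X)\trianglelefteqslant G$ and we can regroup as $(n^{-1}f^{-1}n)f$) and simultaneously in $N$ (because $N\trianglelefteqslant G$ and we can regroup as $n^{-1}(f^{-1}nf)$); hence $[n,f]=1$, so $N$ centralises $\FAlt(X)$.

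The main obstacle is therefore to rule out non-trivial elements of $G$ that centralise $\FAlt(X)$. For this I would argue directly inside $\Sym(X)$: suppose $1 \neq g \in G$ commutes with every element of $\FAlt(X)$ and pick $x \in X$ with $y := xg \neq x$. Using that $X$ is infinite, choose $z, w \in X$ distinct from each other and from $x, y$, and set $\sigma = (x\; z\; w) \in \FAlt(X)$. The relation $g\sigma = \sigma g$ gives $(x)g\sigma = (x)\sigma g$, i.e.\ $y\sigma = (z)g$; since $y \notin \Supp(\sigma)$, the left-hand side equals $y$, so $(z)g = y = (x)g$, forcing $z = x$, a contradiction. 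Hence $C_G(\FAlt(X)) = 1$, so the second case above cannot occur, and every non-trivial normal subgroup of $G$ contains $\FAlt(X)$.

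Finally, to conclude that $\FAlt(X)$ is characteristic in $G$, I would note that any $\phi \in \Aut(G)$ sends minimal non-trivial normal subgroups to minimal non-trivial normal subgroups; by the uniqueness just proved, $\phi(\FAlt(X)) = \FAlt(X)$. This simultaneously establishes that $G$ is monolithic with monolith $\FAlt(X)$.
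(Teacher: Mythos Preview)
Your proof is correct and follows the same overall architecture as the paper's: show that $\FAlt(X)$ is contained in every non-trivial normal subgroup of $G$ by producing, for each $1\ne g\in G$, some $\sigma\in\FAlt(X)$ with $[\sigma,g]\ne 1$ (so that $N\cap\FAlt(X)\ne 1$), and then deduce the characteristic property from this monolithic structure.

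The difference lies in how the non-commuting $\sigma$ is found. The paper splits into three cases according to the cycle structure of $g$ (finite support, infinitely many finite cycles, or an infinite cycle) and in each case builds an explicit $\sigma$ so that $\sigma^{-1}g\sigma g^{-1}$ is a concrete non-trivial element of $\FAlt(X)$. Your argument is more economical: you abstract the problem to showing $C_G(\FAlt(X))=1$, and then a single uniform computation with a $3$-cycle $(x\;z\;w)$ through a moved point $x$ but avoiding $xg$ disposes of all cases at once. This is shorter and avoids the case analysis entirely; the paper's version, on the other hand, has the minor advantage of exhibiting the commutator explicitly in each situation, which connects to the cycle-type bookkeeping used elsewhere in the paper. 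Your deduction of the characteristic property via ``automorphisms permute minimal normal subgroups'' is also slightly crisper than the paper's intersection argument, though logically equivalent.
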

\begin{proof} We first show that $\FAlt(X)$ is a unique minimal normal subgroup of $G$, known as the monolithic property. Clearly $\FAlt(X)$ is normal in $G$, since it is normal in $\Sym(X)$ (conjugation in $\Sym(X)$ preserves cycle type).

Consider $N\unlhd G$. We have $N \cap \FAlt(X) \unlhd \FAlt(X)$, and since $\FAlt(X)$ is simple, $N \cap \FAlt(X)$ must either be trivial or $\FAlt(X)$. Let $g \in N\setminus\{1\}$. This must either: be in $\FSym(X)$; contain infinitely many finite cycles; or contain an infinite cycle. We now show that there exists a $\sigma \in \FAlt(X)$ such that $\sigma^{-1}g\sigma g^{-1} \in \FAlt(X)\setminus\{1\}$. Since $N$ is normal in $G$, $g$ and $\sigma^{-1}g\sigma$ are in $N$ and so this will prove the claim. For the case where $g \in \FSym(X)$, choose $\sigma$ so that $\sigma^{-1}g\sigma$ and $g$ have disjoint supports. For the case where $g$ contains infinitely many finite cycles, pick 4 distinct cycles (each of length greater than 1) of $g$ and points $b_1, b_2, b_3, b_4$: one from each cycle. A suitable $\sigma$ is then $(b_1\;b_2)(b_3\;b_4)$. Finally, assume that $g$ contains an infinite cycle. Let $x_0 \in X$ lie in some infinite cycle of $g$, and for every $i \in \Z$ let $x_i:=x_0g^i$.  Let $a:=(\ldots x_{-3}\;x_{-2}\;x_{-1}\;x_0\;x_1\;x_2\;x_3 \ldots)$ and let $\mu:=(x_{-1}\;x_0\;x_1)$. Straightforward computation shows that $\mu^{-1}a\mu a^{-1}=(x_{-2}\;x_{-1}\;x_1)$. Moreover, since $\mu$ commutes with $ga^{-1}$, we have that $\mu^{-1}g\mu g^{-1}=(x_{-2}\;x_{-1}\;x_1)$. Thus $\mu$ is a suitable candidate for $\sigma$ in this case.

Now, let $\phi \in \Aut(G)$ and consider $\FAlt(X) \cap (\FAlt(X))\phi$. As above, this must be trivial or $\FAlt(X)$. If it were trivial, this would contradict the uniqueness of $\FAlt(X)$ as a minimal, non-trivial, normal subgroup in $G$, and hence $\FAlt(X)$ is characteristic in $G$.
\end{proof}
\begin{rem} \label{FSymchar} We may use Lemma \ref{lem-autresult} and Proposition \ref{prop-faltchar} to prove that all automorphisms of $\Sym(X)$ are inner. Also, consider if $\FSym(X)\leqslant G\leqslant\Sym(X)$. Then, for all $\rho \in N_{\Sym(X)}(G)$ and all $g \in \FSym(X)$, we have that $(g)\phi_\rho$ has the same cycle type as $g$. Thus $\FSym(X)$ is characteristic in $G$.
\end{rem}

We are now ready to produce conditions on the cycle type of elements in $G$ and in $N_{\Sym(X)}(G)$ for automorphisms to have infinite Reidemeister number. In order to do this we will use the condition equivalent to showing that $R(\phi_\rho)=\infty$ (labelled (\ref{eqnequivalenttwistedconj}) above) and well known facts about $\Sym(X)$ regarding cycle type.
\section{Results using facts about conjugacy in $\Sym$}
\begin{lem}\label{innerautsfine} Let $Y$ be an infinite set and $X$ be an infinite subset of $Y$. If $\FAlt(X)$ is a subgroup of $G\le \Sym(Y)$, then $R(\id_G)=\infty$.
\end{lem}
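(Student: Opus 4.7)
The plan is to exhibit an explicit infinite family of elements of $G$ that lie in pairwise distinct conjugacy classes. The natural source of such elements is $\FAlt(X)$, which sits inside $G$ and consists of permutations whose cycle structure on $Y$ is easy to control.

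For each $k \in \N$, choose $3k$ distinct points of $X$ (possible since $X$ is infinite) and let $a_k \in \FAlt(X) \le G$ be a product of $k$ disjoint $3$-cycles on these points. Viewed as an element of $\Sym(Y)$, $a_k$ has cycle type consisting of exactly $k$ three-cycles together with fixed points on all of $Y$ minus the chosen support. These cycle types on $Y$ are pairwise distinct as $k$ varies, since they differ in the number of $3$-cycles.

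The key step is the observation that conjugacy in $G$ refines conjugacy in $\Sym(Y)$: if $a_i \sim_G a_j$ then certainly $a_i \sim_{\Sym(Y)} a_j$, since any conjugating element of $G$ lies in $\Sym(Y)$. Two elements of $\Sym(Y)$ are conjugate in $\Sym(Y)$ if and only if they have the same cycle type as permutations of $Y$. So for $i \ne j$, the elements $a_i$ and $a_j$ are not conjugate in $\Sym(Y)$, hence not conjugate in $G$. Therefore the elements $\{a_k : k \in \N\}$ represent infinitely many distinct conjugacy classes of $G$, giving $R(\id_G) = \infty$.

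There is no real obstacle here; the only point requiring care is that $X$ may be a proper subset of $Y$, so one should check the cycle types on all of $Y$ (not just on $X$). Since distinct numbers of $3$-cycles produce distinct cycle types on $Y$ regardless of the fixed point set outside the support, this causes no difficulty.
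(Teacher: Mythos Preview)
Your proof is correct and follows essentially the same approach as the paper: exhibit infinitely many elements of $\FAlt(X)\le G$ with pairwise distinct cycle types in $\Sym(Y)$, and use that conjugation in $G\le\Sym(Y)$ preserves cycle type. The only cosmetic difference is the choice of family---the paper takes $a_k$ to be a single $(2k+1)$-cycle (and explicitly remarks that any infinite family in $\FAlt(X)$ with distinct cycle types works), while you take $a_k$ to be a product of $k$ disjoint $3$-cycles.
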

\begin{proof} We produce an infinite family of elements in $G$ which all lie in distinct conjugacy classes. We have the equation $g^{-1}ag = b$. Conjugation by elements of $G$ cannot change the cycle type of elements of $\Sym(X)\le \Sym(Y)$. Thus choosing $a_k$ to be a cycle of length $2k+1$ (or any infinite family of elements of $\FAlt(X)$ with distinct cycle types) proves the claim.
\end{proof}
The following is well known.
\begin{lem} Let $G$ be any group. Then, for any $\psi\in\Aut(G)$ and $\phi \in \Inn(G)$, we have that $R(\psi\phi)=R(\psi)$.
\end{lem}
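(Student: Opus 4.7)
The plan is to exhibit an explicit bijection between the set of $\psi\phi$-twisted conjugacy classes and the set of $\psi$-twisted conjugacy classes. Writing $\phi = \iota_c$ for some $c\in G$ with $(g)\iota_c = c^{-1}gc$, and using the right-action convention $(g)(\psi\phi) = ((g)\psi)\phi$, one has $(g)(\psi\phi) = c^{-1}((g)\psi)c$ for every $g\in G$.

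The main step is then the following chain of equivalences, for any $a,b,x\in G$:
\begin{align*}
(x^{-1})(\psi\phi)\cdot a\cdot x = b
&\iff c^{-1}\bigl((x^{-1})\psi\bigr)c\cdot a\cdot x = b \\
&\iff (x^{-1})\psi\cdot (ca)\cdot x = cb,
\end{align*}
the last step being left-multiplication by $c$. In other words, $a\sim_{\psi\phi} b$ witnessed by $x$ if and only if $ca\sim_{\psi} cb$ witnessed by the same $x$.

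This equivalence immediately gives the desired bijection: the assignment $[a]_{\psi\phi}\mapsto [ca]_{\psi}$ is well-defined and injective by the forward and reverse implications above, and surjective because any $d\in G$ can be written as $c\cdot (c^{-1}d)$. Hence $R(\psi\phi)=R(\psi)$. There is no substantive obstacle here; the only care needed is tracking the composition and inner-automorphism conventions, since under the opposite convention $(g)\iota_c = cgc^{-1}$ the same argument produces the bijection $[a]_{\psi\phi}\mapsto [c^{-1}a]_{\psi}$ instead, but either way one obtains a bijection between the two sets of twisted conjugacy classes.
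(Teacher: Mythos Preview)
Your argument is correct. The paper itself does not give a proof of this lemma---it simply records it as ``well known'' and moves on---so there is nothing to compare against. Your computation is the standard one and matches the paper's right-action conventions: with $\phi=\iota_c$ and $(g)\iota_c=c^{-1}gc$, the manipulation
\[
(x^{-1})(\psi\phi)\cdot a\cdot x=b
\;\Longleftrightarrow\;
(x^{-1})\psi\cdot(ca)\cdot x=cb
\]
is exactly the reformulation the paper uses elsewhere (cf.\ equation~(\ref{eqnmanipulatetwistedconj})), and the resulting bijection $[a]_{\psi\phi}\mapsto[ca]_{\psi}$ is well-defined, injective, and surjective for the reasons you give. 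Your remark about the alternative convention $(g)\iota_c=cgc^{-1}$ is also fine and harmless here.
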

\begin{lem}\label{autequalsinn} Let $G$ be a group with subgroup $\FAlt(X)$, where $X$ is an infinite set, and with $\Aut(G)=\Inn(G)$. Then $G$ has the $R_\infty$ property.
\end{lem}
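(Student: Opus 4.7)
The plan is to reduce the proof to the single assertion that $R(\id_G) = \infty$. Since $\Aut(G) = \Inn(G)$, every $\psi \in \Aut(G)$ is inner, and can therefore be written as $\id_G \cdot \psi$ with $\psi \in \Inn(G)$. Applying the preceding lemma with the ``outer'' automorphism taken to be $\id_G$ yields
\[
 R(\psi) \;=\; R(\id_G\cdot\psi) \;=\; R(\id_G)
\]
for every $\psi \in \Aut(G)$. So it suffices to exhibit infinitely many conjugacy classes of $G$.

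For this I would use that $\FAlt(X) \le G$ supplies elements of arbitrarily large distinct orders. For each odd integer $n \ge 3$, choose $n$ distinct points of the infinite set $X$ and let $a_n$ be the $n$-cycle on those points; since $n$ is odd, $a_n$ is an even permutation of finite support, hence lies in $\FAlt(X) \le G$, and has order exactly $n$. Because the order of an element is a conjugacy invariant in any group, the elements $\{a_n : n \ge 3 \text{ odd}\}$ lie in pairwise distinct $G$-conjugacy classes. Therefore $R(\id_G) = \infty$.

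Combining the two observations gives $R(\psi) = R(\id_G) = \infty$ for every $\psi \in \Aut(G)$, which is exactly the $R_\infty$ property for $G$.

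There is no real obstacle here: the statement is essentially a one-line consequence of the preceding lemma. The only step to notice is that $\id_G \cdot \psi = \psi$, so that taking the ``outer'' factor to be $\id_G$ in the preceding lemma converts the hypothesis $\Aut(G) = \Inn(G)$ into a uniform bound $R(\psi) = R(\id_G)$; after that, $\FAlt(X)$ trivially supplies enough conjugacy classes by an order argument, with no appeal to the permutation-group machinery developed earlier.
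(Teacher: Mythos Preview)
Your proof is correct and follows essentially the same route as the paper: both reduce via the preceding lemma to showing $R(\id_G)=\infty$, and both exhibit odd-length cycles in $\FAlt(X)$ as witnesses. The only difference is cosmetic: the paper invokes Lemma~\ref{innerautsfine} (which distinguishes the $a_k$ by cycle type, using $G\le\Sym(Y)$), whereas you distinguish them by order, which is a conjugacy invariant in any group and so sidesteps the need for that extra hypothesis.
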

\begin{proof} Let $\phi\in \Aut(G)$. By assumption $\Aut(G)=Inn(G)$. Therefore, by the previous lemma, $R(\phi)=R(\id_G)$. Now $R(\id_G)=\infty$ by Lemma \ref{innerautsfine}.
\end{proof}
Lemma \ref{autequalsinn} implies that, for any infinite set $X$, $\Sym(X)$ has the $R_\infty$ property.
\begin{not*} For any $g \in \Sym(X)$ and $x \in X$, let $\mathcal{O}_x(g):=\{xg^d : d \in \Z\}$. Also, let $\eta_r(g):=\quotient{|\{x \in X : |\mathcal{O}_x(g)|=r\}|}{r}$, the number of $r$-cycles in $g$. We shall use $\eta_1(g)$ to denote the number of fixed points of $g$ and $\eta_{\infty}(g)$ to denote the number of distinct infinite orbits induced by $g$. If any of these values is infinite then, since our arguments will be unaffected by the size of this infinity, we shall write $\eta_r(g)=\infty$.
\end{not*}
From the previous section, for any group fully containing $\FAlt(X)$ we have that the map $\Psi: N_{\Sym(X)}(G) \rightarrow \Aut(G), \rho\mapsto \phi_\rho$ is an isomorphism. We may therefore consider elements of $\Aut(G)$ as elements of $\Sym(X)$.
\begin{prop}\label{firstresult} Let $G$ fully contain $\FAlt(X)$, where $X$ is an infinite set, and let $\rho \in N_{\Sym(X)}(G)$. If there is an $r \in \N$ such that $\eta_r(\rho)<\infty$, then $R(\phi_\rho)=\infty$.
\end{prop}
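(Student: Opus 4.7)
The plan is to invoke the reformulation (\ref{eqnequivalenttwistedconj}): it suffices to exhibit an infinite family $\{a_k:k\in\N\}\subseteq\FAlt(X)\le G$ such that the products $\rho a_k$ are pairwise non-conjugate in $G$. Since $G$-conjugation is a restriction of $\Sym(X)$-conjugation, which preserves cycle type, it is enough to make the cycle types of the $\rho a_k$ pairwise distinct, and I would arrange this by forcing $\eta_r(\rho a_k)$ to be strictly increasing in $k$.

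To set up, since $\eta_r(\rho)<\infty$, the union $F$ of the supports of the $r$-cycles of $\rho$ is finite, so $Z:=X\setminus F$ is infinite and $\rho$-invariant, and $\rho|_Z$ has no $r$-cycles. All of my $a_k$ will be supported on $Z$, so the $r$-cycles of $\rho$ on $F$ are preserved. The workhorse is a transposition trick: if $(y_0\;y_1\;\ldots\;y_{s-1})$ is a cycle of $\rho|_Z$ with $s>r$, or $(\ldots\;y_{-1}\;y_0\;y_1\;\ldots)$ is an infinite cycle of $\rho|_Z$, then a routine check shows that $\rho\cdot(y_0\;y_r)$ creates the new $r$-cycle $(y_0\;y_1\;\ldots\;y_{r-1})$ and replaces the rest of that cycle by an $(s-r)$-cycle (or an infinite cycle). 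A product of $2k$ such transpositions on disjoint pairs of points is an even permutation, so lies in $\FAlt(X)$, and adds at least $2k$ to $\eta_r$.

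It remains to show that the $2k$ disjoint transpositions can always be produced. I would split into cases on the cycle structure of $\rho|_Z$. If $\rho|_Z$ has an infinite cycle, I use the transpositions $(y_{2jr}\;y_{(2j+1)r})$ for $j=0,\ldots,2k-1$ within that single infinite cycle, where a direct computation confirms they create $2k$ new $r$-cycles. If $\rho|_Z$ has infinitely many finite cycles of length greater than $r$, I apply the trick to $2k$ distinct such cycles. Otherwise, since $Z$ is infinite, pigeonhole forces $\eta_s(\rho|_Z)=\infty$ for some $s\in\{1,\ldots,r-1\}$: if $s=1$ (so $r\ge 2$), I place $2k$ disjoint $r$-cycles on fixed points of $\rho$, a product which is automatically even; if $1<s<r$, I bundle $2m$ disjoint $s$-cycles at a time with $m:=\lceil r/s\rceil$, and on each bundle of support $Y$ I multiply by the correction $\tau:=\rho|_Y^{-1}\sigma$, where $\sigma$ is a chosen permutation of $Y$ that contains two $r$-cycles plus shorter orbits, so that both $\rho|_Y$ and $\sigma$ are even and $\tau$ is even.

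The main obstacle I expect is this last sub-case, $1<s<r$: since the raw cycles are too short to contain an $r$-cycle individually, I have to combine several of them, and the correcting permutation's parity must be tracked; bundling $2m$ at a time ensures evenness while only requiring finitely many $s$-cycles per step of the family, which is available since $\eta_s(\rho|_Z)=\infty$. In every branch the construction yields a strictly increasing sequence $\eta_r(\rho a_1)<\eta_r(\rho a_2)<\cdots$, so the $\rho a_k$ lie in pairwise distinct $\Sym(X)$-conjugacy classes and hence in pairwise distinct $G$-conjugacy classes, giving $R(\phi_\rho)=\infty$.
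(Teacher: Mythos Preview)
Your argument is correct, but it is organised differently from the paper's and ends up doing more work. The paper does not stick with the given $r$; instead it replaces $r$ by the \emph{smallest} $s\in\N$ with $\eta_s(\rho)<\infty$, and this single choice collapses most of your case analysis. If $s>1$ then by minimality $\eta_1(\rho)=\infty$, so there are infinitely many fixed points and one simply lays down $2k$ disjoint $s$-cycles on $\Fix(\rho)$; this is exactly your sub-case ``$s=1$'' transplanted to the minimal index, and it makes your bundling sub-case $1<s<r$ entirely unnecessary. If $s=1$ the paper splits, as you do, on whether $\rho$ has an infinite cycle: with an infinite cycle it uses transpositions along that cycle to manufacture new fixed points (your transposition trick, but aimed at $\eta_1$ rather than $\eta_r$); with no infinite cycle it simply multiplies by inverses of an infinite supply of finite cycles of $\rho$, again increasing $\eta_1$.

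So both proofs execute the same ``make $\eta_s(\rho a_k)$ strictly increasing'' strategy via (\ref{eqnequivalenttwistedconj}); the difference is purely in the choice of $s$. Your direct attack on the given $r$ is self-contained but forces you to handle the awkward situation where the ambient cycles are shorter than $r$, which you resolve correctly with the $2m$-bundle parity argument. The paper's passage to the minimal $s$ buys a cleaner three-case split and avoids that sub-case altogether.
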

\begin{proof} We shall work with the reformulation of twisted conjugacy in (\ref{eqnequivalenttwistedconj}) above and argue for any $\rho \in N_{\Sym(X)}(G)$ using three cases. Let $s \in \N$ be the smallest number such that $\eta_s(\rho)$ is finite.

\underline{Case A:} $s=1$ and $\eta_\infty(\rho)>0$. As with the proof of Proposition \ref{prop-faltchar}, let $x_0$ lie in an infinite cycle of $\rho$ and, for each $i \in \Z$, let $x_i:=x_0g^i$. For each $k \in \N$, let
$$
a_k:=\prod\limits_{i=0}^{k-1} (x_{2i}\;\;x_{2i+1}).
$$
The set of elements lying in disjoint $\phi_\rho$-twisted conjugacy classes is then given by $\{a_{2k}\mid k \in \N\}\subset \FAlt(X)$. This is because $\eta_1(\rho a_k)$ is finite for all $k\in \N$, and is strictly  increasing as a function of $k$. Thus, if $i\ne j$, the elements $\rho a_i$ and $\rho a_j$ have a different number of fixed points and hence are not conjugate in $G\leqslant\Sym(X)$.

\underline{Case B:} $s=1$ and $\eta_\infty(\rho)=0$. Since $\rho$ has finitely many fixed points and no infinite cycles, $\rho$ contains infinitely many finite cycles. Thus $\rho$ has infinitely many odd length cycles or infinitely many even length cycles. First assume that $\rho$ has infinitely many odd length cycles and index a countably infinite subset of these by the natural numbers. Let $\rho=\rho'\prod_{i \in \N}\rho_i$, where each $\rho_i$ is a finite cycle of odd length and $\rho'\in\Sym(X)$ has cycles with disjoint support from all of the $\rho_i$'s. Now, for any $m \in \N$, $\rho (\rho_m)^{-1}$ has more fixed points than $\rho$. Defining 
$$
a_k:=\prod\limits_{i=1}^{k} \rho_i^{-1} \in \FAlt(X)
$$
means that $i<j \Rightarrow \eta_1(\rho a_i)<\eta_1(\rho a_j)$ and so $\{a_k \mid k \in \N\}$ provides our infinite family of elements which are pairwise not $\phi_\rho$-twisted conjugate. Similarly, if $\rho$ has infinitely many even length cycles, complete the same construction with $\rho=\rho'\prod_{i \in \N}\rho_i$ where each $\rho_i$ is a finite cycle of $\rho$ of even length and $\rho'\in \Sym(X)$ has cycles with disjoint support from all of the $\rho_i$'s. Then  $\{a_{2k} \mid k \in \N\}$ provides the infinite family in $\FAlt(X)$.

\underline{Case C:} $s>1$. All we shall use is that $\rho$ has infinitely many fixed points. For any $k \in \N$, let $a_k$ consist of $2k$ $s$-cycles such that $\Supp(a_k)\subset X \setminus \Supp(\rho)$. We then have, for all $k \in \N$: that $a_k \in \FAlt(X)$; that $\eta_s(\rho a_k)$ is finite; and that $\eta_s(\rho a_k)$
is strictly increasing as a function of $k$.
\end{proof}

\begin{prop}\label{prop-conjcond} Let $a, b \in \Sym(X)$, $\Supp(b)\subsetneq \Supp(a)$, and $g \in \Sym(X)$ satisfy $g^{-1}ag=b$. Then $\eta_\infty(g)>0$.
\end{prop}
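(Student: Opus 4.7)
The plan is to leverage the conjugation identity $g^{-1}ag=b$ to show that right multiplication by $g$ restricts to a bijection from $\Supp(a)$ onto $\Supp(b)$. Then, because $\Supp(b)\subsetneq\Supp(a)$, any point $x_0$ in $\Supp(a)\setminus\Supp(b)$ will have an infinite forward $g$-orbit, which already implies $\mathcal{O}_{x_0}(g)$ is infinite and so $\eta_\infty(g)>0$.

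First I would verify that $y\mapsto yg$ sends $\Supp(a)$ bijectively onto $\Supp(b)$. Working with right actions, for any $x\in X$ one has $xb=x(g^{-1}ag)=((xg^{-1})a)g$, so $xb=x$ if and only if $(xg^{-1})a=xg^{-1}$. Thus $x\in\Fix(b)$ if and only if $xg^{-1}\in\Fix(a)$, which rearranges to $\Supp(b)=\Supp(a)\cdot g$ with $g$ providing the bijection.

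The heart of the argument is then a short iteration. Pick $x_0\in\Supp(a)\setminus\Supp(b)$, which is non-empty by hypothesis. Since $x_0\in\Supp(a)$, the first step gives $x_0 g\in\Supp(b)\subseteq\Supp(a)$, and inductively $x_0 g^n\in\Supp(b)$ for every $n\ge 1$. In particular $x_0 g^n\ne x_0$ for all $n\ge 1$, since $x_0\notin\Supp(b)$. If $x_0 g^n=x_0 g^m$ for some $0\le n<m$, applying $g^{-n}$ yields $x_0=x_0 g^{m-n}\in\Supp(b)$, a contradiction. So the forward orbit $\{x_0 g^n : n\ge 0\}$ is infinite, and hence so is $\mathcal{O}_{x_0}(g)$.

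I do not anticipate a real obstacle here; the only mildly delicate point is tracking the right-action convention carefully in the first step so that the equality $\Supp(b)=\Supp(a)\cdot g$ (rather than $\Supp(a)\cdot g^{-1}$) is obtained correctly. Once that bijection is in place, the iteration uses only that $g$ is injective and that $\Supp(a)\setminus\Supp(b)$ is non-empty.
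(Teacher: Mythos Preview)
Your proof is correct and follows essentially the same approach as the paper: both arguments establish that $g$ carries $\Supp(a)$ bijectively onto $\Supp(b)$ and then iterate from a point of $\Supp(a)\setminus\Supp(b)$. The only cosmetic difference is that the paper phrases the iteration as a contradiction (assuming all $g$-orbits are finite and returning to the starting point), whereas you argue directly that the forward orbit never returns; the underlying idea is identical.
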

\begin{proof} We assume, for a contradiction, that $\eta_\infty(g)=0$. Since $g^{-1}ag=b$, $g$ must restrict to a bijection from $\Supp(a)$ to $\Supp(b)$ i.e.\
$$(\Supp(a)\cup\Supp(b))\setminus(\Supp(a)\cap \Supp(b))\subseteq\Supp(g)$$
which from our hypotheses implies that
$$\Supp(a)\setminus \Supp(b)\subseteq\Supp(g)$$
where $\Supp(a)\setminus \Supp(b)\ne\emptyset$ since $\Supp(b)\ne \Supp(a)$. Thus $g$ sends some $n \in \Supp(a)\setminus\Supp(b)$ to some $m \in \Supp(b)$. Now, since all of the cycles in $g$ are finite, there is a $k \in \N$ such that $(n)g^k=n$. Therefore $g$ sends a point in $\Supp(b)$ to a point in $X\setminus\Supp(b)$. This would mean that $\Supp(g^{-1}ag)\cap (X\setminus\Supp(b))\ne \emptyset$ and that $g^{-1}ag$ and $b$ have different supports, a contradiction.
\end{proof}
\begin{cor}\label{prop-infinitecycleneeded} Let $G$ be a group fully containing $\FAlt(X)$. If $\eta_\infty(g)=0$ for all $g \in G$, then $G$ has the $R_\infty$ property.
\end{cor}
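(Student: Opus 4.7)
The plan is to dispatch $\rho \in N_{\Sym(X)}(G)$ using Proposition \ref{firstresult} whenever it applies, and to deal with the one leftover pathology via a support argument based on Proposition \ref{prop-conjcond}.

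First, if there exists some $r \in \N$ with $\eta_r(\rho) < \infty$, then Proposition \ref{firstresult} directly yields $R(\phi_\rho) = \infty$, so we may assume $\eta_r(\rho) = \infty$ for every $r \in \N$. In particular $\eta_1(\rho) = \infty$, so $\rho$ has infinitely many fixed points. Choose a sequence $z_1, z_2, \ldots$ of distinct fixed points of $\rho$ and, for each $k \in \N$, set
$$a_k := (z_1\; z_2\; z_3)(z_4\; z_5\; z_6)\cdots(z_{3k-2}\; z_{3k-1}\; z_{3k}) \in \FAlt(X) \le G.$$
Since $\Supp(a_k) \subseteq \Fix(\rho)$ is disjoint from $\Supp(\rho)$, a direct computation gives $\Supp(\rho a_k) = \Supp(\rho) \cup \{z_1, \ldots, z_{3k}\}$; in particular $\Supp(\rho a_1) \subsetneq \Supp(\rho a_2) \subsetneq \cdots$ is a strictly ascending chain.

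Now suppose $g \in G$ satisfies $g^{-1}(\rho a_i) g = \rho a_j$ for some $i,j \in \N$. By hypothesis $\eta_\infty(g) = \eta_\infty(g^{-1}) = 0$, so the contrapositive of Proposition \ref{prop-conjcond} forbids $\Supp(\rho a_j) \subsetneq \Supp(\rho a_i)$, and applying the same proposition to $g^{-1}$ forbids $\Supp(\rho a_i) \subsetneq \Supp(\rho a_j)$. By construction $i \ne j$ would force exactly one of these strict containments, so $i = j$. Hence the $\rho a_k$ lie in pairwise distinct $G$-conjugacy classes, and the reformulation (\ref{eqnequivalenttwistedconj}) yields $R(\phi_\rho) = \infty$ for every $\rho$, so $G$ has the $R_\infty$ property.

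The main obstacle is identifying the case Proposition \ref{firstresult} misses — where $\rho$ has infinitely many $r$-cycles for every finite $r$ — and then exploiting the abundance of $\rho$-fixed points it forces to build a chain of strictly ascending supports that conjugation by elements of $G$ provably cannot traverse.
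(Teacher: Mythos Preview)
Your argument is correct and follows essentially the same route as the paper: reduce via Proposition \ref{firstresult} to the case where $\rho$ has infinitely many fixed points, build elements of $\FAlt(X)$ supported on those fixed points so that the $\rho a_k$ have strictly nested supports, and then invoke Proposition \ref{prop-conjcond} (together with the hypothesis $\eta_\infty(g)=0$) to rule out any $G$-conjugacy among them. The only cosmetic difference is that the paper uses products of pairs of $2$-cycles rather than your $3$-cycles, and you are a bit more explicit about applying Proposition \ref{prop-conjcond} to both $g$ and $g^{-1}$ to exclude strict containment in either direction.
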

\begin{proof} By Proposition \ref{firstresult}, if $\phi_\rho \in \Aut(G)$ has $\eta_s(\rho)<\infty$ for some $s \in \N$, then $R(\phi_\rho)=\infty$. We may therefore assume that $\eta_r(\rho)=\infty$ for all $r \in \N$. This implies that $X\setminus \Supp(\rho)$ is an infinite set.

Our aim is to show that there is an infinite set of elements in $G$ which are not $\phi_\rho$-twisted conjugate. Let $b_0:=1$, the identity element of $G$. For each $k \in \N$, let $b_k:=b_k'b_{k-1}$ where $\eta_2(b_k')=2$, $|\Supp(b_k')|=4$, $\Supp(b_k')\subset X\setminus \Supp(\rho)$, and $\Supp(b_k')\cap\Supp(b_{k-1})=\emptyset$. Thus, for each $k \in \N$, $b_k \in \FAlt(X)$ and $\eta_2(b_k)=2k$. If $i<j$, then $\Supp(b_i)\subsetneq\Supp(b_j)$ and so $\Supp(\rho b_i)\subsetneq\Supp(\rho b_j)$. Since $\eta_\infty(g)=0$ for all $g \in G$, Proposition \ref{prop-conjcond} implies that not two elements in $\{\rho b_k\mid k \in \N\}$ are conjugate in $G$ i.e.\ $R(\phi_\rho)=\infty$.
\end{proof}
Notice that this provides an alternative proof to \cite{rinfinityHou} and \cite{rinfinityHou2} that $\FSym(X)$ has the $R_\infty$ property. We also have the following.
\begin{cor}\label{cor-torsion}
Let $G$ be an infinite torsion group which fully contains $\FAlt(X)$. Then $G$ has the $R_\infty$ property. 
\end{cor}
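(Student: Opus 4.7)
The plan is to deduce this directly from Corollary \ref{prop-infinitecycleneeded}. The only thing that needs checking is that the hypothesis ``$\eta_\infty(g) = 0$ for all $g \in G$'' is automatic once $G$ is assumed to be torsion.

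First I would observe that if $g \in G$ has finite order $n$, then $g^n = 1$, so for every $x \in X$ we have $x g^n = x$. Hence the orbit $\mathcal{O}_x(g) = \{x g^d \mid d \in \Z\}$ has cardinality at most $n$, and in particular is finite. Therefore $\eta_\infty(g) = 0$. Since $G$ is torsion, this holds for every $g \in G$.

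With that verified, the hypothesis of Corollary \ref{prop-infinitecycleneeded} is satisfied, and its conclusion gives at once that $G$ has the $R_\infty$ property. There is no real obstacle here: all the work has already been done in Proposition \ref{firstresult} and Corollary \ref{prop-infinitecycleneeded}, and the role of the torsion hypothesis is simply to rule out infinite cycles in elements of $G$ (though note that it does \emph{not} rule out infinite cycles in elements of $N_{\Sym(X)}(G)$, which is why we genuinely need Corollary \ref{prop-infinitecycleneeded} rather than a more naive argument).
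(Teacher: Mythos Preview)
Your proof is correct and matches the paper's approach exactly: the paper states this corollary without proof, simply noting beforehand that ``clearly torsion groups satisfy Corollary \ref{prop-infinitecycleneeded}.'' Your explicit verification that finite order forces finite orbits is precisely the observation the paper is relying on.
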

\begin{lem}\label{torFAlt} Let $G\le \Sym(X)$ be torsion. Then $\langle G, \FAlt(X)\rangle$ is also torsion.
\end{lem}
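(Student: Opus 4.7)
The plan is to exploit that $\FAlt(X)$ is normal in $\Sym(X)$ (since conjugation preserves cycle type and parity), and hence normal in $H:=\langle G,\FAlt(X)\rangle$. From this I would deduce that $H=G\cdot\FAlt(X)$, so every element of $H$ can be written as $h=g\sigma$ with $g\in G$ and $\sigma\in\FAlt(X)$. It therefore suffices to show that each such $h$ has finite order.

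Given such an $h$, I would let $n$ be the (finite) order of $g$ and expand $h^n=(g\sigma)^n$ by successively moving each factor of $g$ to the left using the identity $\sigma g=g(g^{-1}\sigma g)$. A routine induction on $n$ gives
$$
h^n=g^n\cdot(g^{-(n-1)}\sigma g^{n-1})(g^{-(n-2)}\sigma g^{n-2})\cdots(g^{-1}\sigma g)\sigma.
$$
Each factor $g^{-i}\sigma g^i$ lies in $\FAlt(X)$ by normality, and $g^n=1$, so $h^n\in\FAlt(X)$. Since elements of $\FAlt(X)$ have finite support and therefore lie in some finite alternating group, $h^n$ has finite order $m$, giving $h^{nm}=1$.

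The only step requiring care is the bookkeeping in expanding $(g\sigma)^n$; there is no conceptual obstacle once normality of $\FAlt(X)$ is used to collapse $h^n$ into $\FAlt(X)$. Note that torsion of $g$ and of $\sigma$ alone would not be enough — the product of two torsion elements in $\Sym(X)$ need not be torsion — so the normality of $\FAlt(X)$ is doing genuine work here.
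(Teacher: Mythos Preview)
Your argument is correct. Normality of $\FAlt(X)$ in $\Sym(X)$ gives $H=G\cdot\FAlt(X)$, your expansion of $(g\sigma)^n$ is right, and once $h^n\in\FAlt(X)$ the conclusion is immediate.

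The paper argues differently. Rather than raising $h=\sigma g$ to the order of $g$, it builds a finite $g$-invariant set $F\supseteq\Supp(\sigma)$ (namely the union of the $g$-orbits meeting $\Supp(\sigma)$, which are all finite since $g$ is torsion). Then $h$ preserves both $F$ and $X\setminus F$; on $F$ it is a permutation of a finite set, and on $X\setminus F$ it agrees with $g$, so both restrictions have finite order and hence so does $h$. Your route is cleaner and purely algebraic, and it avoids having to identify $F$ precisely. The paper's geometric decomposition, however, buys something you do not get for free: it immediately shows that if every element of $G$ has only finite orbits (even if $G$ is not torsion), then the same is true of $\langle G,\FAlt(X)\rangle$. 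This is recorded as a remark right after the lemma and is exactly what is invoked for the residually-finite corollary later on. Your proof uses $g^n=1$ in an essential way, so to recover that remark you would need to revert to something like the orbit argument anyway.
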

\begin{proof} Consider an element $\sigma g$ where $\sigma \in \FSym(X)$ and $g \in G$. It suffices to show that $\sigma g$ is torsion. Fix a $k\in \N$ such that $\Supp(\sigma)$ has trivial intersection with all orbits of $g$ of size $r>k$. Since $G$ is torsion, the order of $g$ is one such $k$ (but since $|\Supp(\sigma)|<\infty$, we could find a $k$ using only the assumption that all orbits of all elements of $G$ are finite). Let
\[F:=\bigcup\limits_{1\leqslant i\leqslant k}\Supp(g^{-i}\sigma g^i).\]
Now $|F|\le k|\Supp(\sigma)|<\infty$, $\sigma g$ restricts to a bijection on $X\setminus F$, the elements $\sigma g$ and $g$ induce the same permutation on $X\setminus F$, and the orbits of $g$ within $X\setminus F$ are of bounded size (since otherwise $g$ cannot be torsion). Hence the orbits of $\sigma g$ within $X$ are of bounded size and so $\sigma g$ is torsion.
\end{proof}
\begin{rem}\label{finiteorbitsremainfinite} A consequence of this proof is that if all elements of $G\le \Sym(X)$ have only finite orbits, then the elements of $\langle G, \FAlt(X)\rangle$ also only have finite orbits.
\end{rem}
\begin{cor} Let $G$ be a torsion group. For every infinite $\alpha\ge |G|$, there exists a torsion group $H_\alpha$ of cardinality $\alpha$ which has the $R_\infty$ property and contains an isomorphic copy of $G$.
\end{cor}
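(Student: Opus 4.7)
The plan is to invoke Corollary \ref{cor-torsion} directly: we just need to produce, for any infinite cardinal $\alpha\ge|G|$, an infinite torsion group of cardinality $\alpha$ that fully contains $\FAlt(X)$ for some set $X$ and that contains $G$. Since $R_\infty$ forces infinitely many conjugacy classes, one implicitly takes $\alpha$ infinite (otherwise $H_\alpha$ would be finite and could not have $R_\infty$).

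First I would pick a set $X$ of cardinality $\alpha$ and embed $G$ faithfully into $\Sym(X)$. When $|G|\le|X|$ this is standard: use the regular representation $G\hookrightarrow\Sym(G)$ and then extend each element to act as the identity on $X\setminus G$, giving a faithful embedding $\Psi\colon G\hookrightarrow\Sym(X)$. Then set
\[
H_\alpha:=\langle (G)\Psi,\FAlt(X)\rangle\le \Sym(X),
\]
which by construction fully contains $\FAlt(X)$ and contains an isomorphic copy of $G$.

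Next I would verify the two properties of $H_\alpha$. For torsion: $(G)\Psi$ is torsion because $G$ is, so Lemma \ref{torFAlt} applied to $(G)\Psi\le\Sym(X)$ shows $H_\alpha$ is torsion. For cardinality: $H_\alpha$ contains $\FAlt(X)$, whose cardinality equals $|X|=\alpha$ by the observation immediately after Lemma \ref{faltgenset}, so $|H_\alpha|\ge\alpha$; conversely $|H_\alpha|\le|G|\cdot|\FAlt(X)|=|G|\cdot\alpha=\alpha$, using $|G|\le\alpha$ and $\alpha$ infinite. Hence $|H_\alpha|=\alpha$.

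Finally, $H_\alpha$ is an infinite torsion group fully containing $\FAlt(X)$, so Corollary \ref{cor-torsion} gives that $H_\alpha$ has the $R_\infty$ property. I do not expect a serious obstacle here: the only subtlety is ensuring one really gets a faithful action on a set of the prescribed cardinality (handled by padding with fixed points) and the cardinality bookkeeping on $|H_\alpha|$, both of which are routine.
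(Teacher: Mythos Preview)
Your proposal is correct and follows essentially the same route as the paper: embed $G$ into $\Sym(X)$ for a set $X$ of cardinality $\alpha$ via the regular representation padded by fixed points, form $H_\alpha=\langle (G)\Psi,\FAlt(X)\rangle$, invoke Lemma~\ref{torFAlt} for torsion and Corollary~\ref{cor-torsion} for $R_\infty$. Your cardinality bookkeeping is in fact more explicit than the paper's (which simply asserts $|H_\alpha|=\alpha$); the upper bound $|H_\alpha|\le |G|\cdot|\FAlt(X)|$ is justified since $\FAlt(X)\trianglelefteq\Sym(X)$ forces $H_\alpha=(G)\Psi\cdot\FAlt(X)$.
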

\begin{proof} Let $G$ be torsion, $\hat{G}$ denote the right regular representation of $G$, and let $\alpha \ge G$ if $G$ is infinite and $\alpha\ge |\Z|$ otherwise. Then there is a set $Y_\alpha$ such that $|Y_\alpha |=\alpha$. Also $\hat{G}\le \Sym(G) \hookrightarrow \Sym(G\sqcup Y_\alpha)$ via the natural inclusion of the set $G$ into the set $G\sqcup Y_\alpha$. Let $G_\alpha$ denote the image of $\hat{G}$ in $\Sym(G\sqcup Y_\alpha)$, using the restriction of this map. Now $H_\alpha:=\langle G_\alpha, \FAlt(G\sqcup Y_\alpha)\rangle$ has cardinality $\alpha$. Moreover it is torsion by Lemma \ref{torFAlt} and so has the $R_\infty$ property by Corollary \ref{cor-torsion}.
\end{proof}

There are also groups which are not torsion and have no infinite cycles. Consider an element $\rho \in \Sym(X)$ with $\eta_r(\rho)$ non-zero for infinitely many $r \in \N$. Then $\rho$ has infinite order, but need not contain an infinite cycle. Therefore $\rho$  generates an infinite cyclic group, but $\langle \rho, \FSym(X)\rangle$ is not finitely generated. This is an interesting example since $\FSym(\Z)\rtimes \Z$, which also consists of the group $\FSym$ together with a single element of infinite order, is 2-generated (being the second Houghton group $H_2$). For another example, consider $G=\prod_{i\in\N} C_2$. This can be seen as a subgroup $G_1$ of $\Sym(\{1,2\}\times\N)$ where the $i$th $C_2$ transposes the points $(1,i)$ and $(2, i)$ and fixes all other points of $\{1,2\}\times\N$. Now $\prod_{i\in\N} C_2\cong \bigoplus_{i \in \R}C_2$ (both are vector spaces of rank $|\R|$ over $F_2$) and so $G$ can also be seen as a subgroup $G_2$ of $\Sym(\{1,2\}\times\R)$ with generators $g_i$ (for each $i \in \R$) that transpose the points $(1,i)$ and $(2, i)$ and fix all other points of $\{1,2\}\times\R$. But $\langle G_1, \FAlt(\{1,2\}\times\N)\rangle\not\cong\langle G_2, \FAlt(\{1,2\}\times\R)\rangle$, since $\langle G_1, \FAlt(\{1,2\}\times\N)\rangle\le \Sym(\{1,2\}\times\N)$ and $\FAlt(\{1,2\}\times\R)$ does not embed into $\Sym(\{1,2\}\times\N)$ by \cite{faltembedintosym}.

\begin{cor}\label{cor-countabletorsion} There exist uncountably many countable torsion groups which have the $R_\infty$ property.
\end{cor}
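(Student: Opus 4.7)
The plan is to exhibit $2^{\aleph_0}$ pairwise non-isomorphic countable torsion groups with the $R_\infty$ property by indexing a family over the infinite subsets of the primes. For each such $S$, I would take $G_S := \bigoplus_{p \in S} C_p$, a countably infinite abelian torsion group, embed it into $\Sym(G_S)$ via the right regular representation, and set
\[
H_S := \langle G_S, \FAlt(G_S)\rangle \le \Sym(G_S).
\]

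By Lemma \ref{torFAlt} each $H_S$ is torsion, and it is plainly countable and fully contains $\FAlt(G_S)$, so Corollary \ref{cor-torsion} gives that it has the $R_\infty$ property. Since there are $2^{\aleph_0}$ infinite subsets of the primes, the corollary will follow once I show that $S \ne T$ implies $H_S \not\cong H_T$.

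The distinguishing invariant is the quotient $H_S/\FAlt(G_S)$. Under the regular representation every non-identity element $g \in G_S$ acts fixed-point-freely and decomposes into disjoint cycles all of length equal to the order of $g$, so $g$ has infinite support as an element of $\Sym(G_S)$. This forces $G_S \cap \FSym(G_S) = \{1\}$, and combined with the fact that $G_S$ normalises $\FAlt(G_S)$ this yields $H_S = \FAlt(G_S)\rtimes G_S$, whence $H_S/\FAlt(G_S) \cong G_S$. Proposition \ref{prop-faltchar} says that $\FAlt(G_S)$ is characteristic in $H_S$, so any isomorphism $H_S \cong H_T$ descends to an isomorphism $G_S \cong G_T$; reading off the primes $p$ for which these abelian groups contain an element of order $p$ then forces $S = T$. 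The only real point to verify is that the regular action gives $G_S \cap \FSym(G_S) = \{1\}$, without which the quotient computation would fail; everything else is formal.
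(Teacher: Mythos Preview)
Your proof is correct and follows essentially the same strategy as the paper: both index over subsets $S$ of the primes, form $G_S=\bigoplus_{p\in S}C_p$, adjoin $\FAlt$, invoke Lemma~\ref{torFAlt} and Corollary~\ref{cor-torsion}, and then distinguish the resulting groups by passing to the quotient by the monolith $\FAlt$ (Proposition~\ref{prop-faltchar}) to recover $G_S$. The only cosmetic difference is that the paper embeds every $G_S$ into $\Sym(\N\times\N)$ via an explicit action on a fixed countable set (so all $S\subseteq\P$ are allowed), whereas you use the regular representation on $G_S$ itself and therefore restrict to infinite $S$; either embedding works equally well.
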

\begin{proof} We will work within $\Sym(\N\times\N)$. For each $n\ge 2$, define
\[\phi^{(n)}:C_n\hookrightarrow \Sym(\N\times\N), (1\;\ldots\;n)\mapsto \rho_n\]
where $\Supp(\rho_n)=\{(m,n)\mid m\in \N\}$ and
\begin{align*}
(m,n)\rho_n:=\left\{\begin{array}{ll}(m-n+1, n)&\text{if}\;m\equiv0\bmod{n}\\(m+1, n)&\text{otherwise}\end{array}\right.
\end{align*}
i.e.\ $\rho_n$ consists of $n$-cycles `all the way along' the $n$\ts{th} copy of $\N$.

Let $\P$ denote the set of all prime numbers. Then, for any subset $S\subseteq\P$, let $G_S:=\bigoplus_{p \in S}C_p$. Note that there are uncountably many choices for $S$. Also,
\[\bigoplus\limits_{p \in S}C_p\hookrightarrow \Sym(\N\times\N)\]
by using the maps $\phi^{(n)}$ defined above. For any $S\subseteq\P$, let $\tilde{G}_S:=\langle G_S, \FAlt(\N\times\N)\rangle$, which fully contains $\FAlt(\N\times\N)$ and, by Lemma \ref{torFAlt}, is torsion. Hence Corollary \ref{cor-torsion} applies to $\tilde{G}_S$ and it has the $R_\infty$ property. Our final aim is therefore to show that if $S\ne S'$, then $\tilde{G}_S$ and $\tilde{G}_{S'}$ are not isomorphic. By Proposition \ref{prop-faltchar}, $\tilde{G}_S$ and $\tilde{G}_{S'}$ each have $\FAlt(\N\times\N)$ as a unique minimal normal subgroup. Since $G_S$ and $G_{S'}$ contain no non-trivial elements of finite support,
\begin{align*}\quotient{\tilde{G}_S}{\FAlt(\N\times\N)}\cong G_S\;\text{and}\;\quotient{\tilde{G}_{S'}}{\FAlt(\N\times\N)}\cong G_{S'}.\end{align*} 
Hence if $\tilde{G}_S$ and $\tilde{G}_{S'}$ are isomorphic, then $G_S$ and $G_{S'}$ are isomorphic. But since $S\ne S'$, there is a $p \in \P$ in one set that is not in the other. Without loss of generality let $p \in S\setminus S'$. By construction, $G_S$ has $p$-torsion but $G_{S'}$ does not. Hence $\tilde{G}_S\not\cong \tilde{G}_{S'}$.
\end{proof}

\begin{cor}\label{cor-countabletorsionfg} There exist uncountably many finitely generated torsion groups which have the $R_\infty$ property.
\end{cor}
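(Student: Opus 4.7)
The plan is to adapt Corollary \ref{cor-countabletorsion} by replacing the non-finitely-generated direct sum $G_S=\bigoplus_{p\in S}C_p$ with a known uncountable family $\{G_\omega\}_{\omega\in\Omega}$ of pairwise non-isomorphic finitely generated infinite torsion groups. Such a family is supplied, for example, by Grigorchuk's construction of the groups indexed by sequences in $\{0,1,2\}^\N$, which produces a continuum of pairwise non-isomorphic finitely generated infinite torsion groups.

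For each $\omega$, I would fix a faithful action $G_\omega\hookrightarrow\Sym(X_\omega)$ on an infinite set $X_\omega$, together with a \emph{finite} set $T\subseteq\FAlt(X_\omega)$ of 3-cycles chosen so that the $G_\omega$-conjugates of $T$ generate $\FAlt(X_\omega)$. Setting $\tilde{G}_\omega:=\langle G_\omega,T\rangle$, we then have $\FAlt(X_\omega)\le\tilde{G}_\omega$, and in fact $\tilde{G}_\omega=\langle G_\omega,\FAlt(X_\omega)\rangle$, which is torsion by Lemma \ref{torFAlt}. Since $\tilde{G}_\omega$ is finitely generated (by the generators of $G_\omega$ together with $T$), torsion, and fully contains $\FAlt(X_\omega)$, Corollary \ref{cor-torsion} gives it the $R_\infty$ property.

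To see the $\tilde{G}_\omega$ are pairwise non-isomorphic, I would invoke Proposition \ref{prop-faltchar}: $\FAlt(X_\omega)$ is characteristic in $\tilde{G}_\omega$, so $\tilde{G}_\omega/\FAlt(X_\omega)$ is an isomorphism invariant. Arranging $G_\omega\cap\FAlt(X_\omega)=\{1\}$ (automatic for the regular representation of $G_\omega$ on itself, since every non-identity element of an infinite group has full support under right translation), the second isomorphism theorem gives $\tilde{G}_\omega/\FAlt(X_\omega)\cong G_\omega$, so distinct $\omega$ produce non-isomorphic $\tilde{G}_\omega$, yielding the required uncountable family.

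The main obstacle is producing a \emph{finite} seed set $T$ whose $G_\omega$-conjugates generate $\FAlt(X_\omega)$: in the regular representation this is impossible, since the $G_\omega$-orbit of a single 3-cycle covers only a small proportion of all 3-cycles. What is needed is an action of $G_\omega$ on some infinite set that is transitive enough (for instance $3$-transitive, in which case one 3-cycle suffices) for a finite set of seeds to work. Since such actions are not automatic for arbitrary f.g.\ torsion groups, one way to proceed is to first embed each $G_\omega$ into a suitable finitely generated torsion group admitting a highly transitive action (compare the construction from \cite{osintransitive} cited in the introduction), and apply the above argument to these larger groups, taking care that the embedding preserves the distinction between different $\omega$.
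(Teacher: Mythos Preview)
Your overall architecture is exactly the paper's: take an uncountable family of pairwise non-isomorphic finitely generated infinite torsion groups, extend each by $\FAlt$ (or $\FSym$) of the set it acts on, invoke Lemma~\ref{torFAlt} for torsion, Corollary~\ref{cor-torsion} for $R_\infty$, and Proposition~\ref{prop-faltchar} to distinguish the resulting groups via the quotient by the monolith. The paper uses Tarski monsters rather than Grigorchuk groups, but that is immaterial.

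Where you diverge is the finite-generation step, and here you have talked yourself into a difficulty that is not really there. You correctly observe that in the regular representation the $G$-conjugates of a \emph{single} $3$-cycle are far from all $3$-cycles; but that does not prevent a finite seed set from working. In fact, if $G=\langle s_1,\ldots,s_k\rangle$ acts on itself by right multiplication, then $G$ together with the transpositions $t_i=(1,s_i)$ already generates $\langle \hat G,\FSym(G)\rangle$: from $(1,w)$ and $(1,s_i)^{w}=(w,s_iw)$ one obtains $(1,s_iw)$, and by induction every $(1,g)$, hence every transposition, lies in the subgroup generated. This is precisely the content of \cite[Prop.~5.10]{osintransitive}, which the paper simply cites. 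So the regular representation \emph{is} the right action, no passage to a highly transitive overgroup is needed, and the ``main obstacle'' you flag dissolves once you allow seeds other than $3$-cycles (or, equivalently, work with $\FSym$ rather than $\FAlt$, as the paper does here).

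In summary: your proof is correct in outline and essentially the paper's, but the final paragraph should be replaced by a direct appeal to \cite[Prop.~5.10]{osintransitive} applied to the regular representation, rather than the detour through highly transitive embeddings.
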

\begin{proof} In \cite{tarskimonster} the Tarski monsters, an uncountable family of finitely generated infinite p-groups, are described. Let $M_1$ and $M_2$ be non-isomorphic Tarski monsters. For any group $G$, let $\hat{G}$ denote the right regular representation of $G$ and let $\tilde{G}:=\langle \hat{G}, \FSym(G)\rangle$. By Lemma \ref{torFAlt}, $\tilde{M_1}$ and $\tilde{M_2}$ are torsion. By \cite[Prop 5.10]{osintransitive}, $\tilde{M_1}$ and $\tilde{M_2}$ are finitely generated. Moreover $\tilde{M_1}\not\cong \tilde{M_2}$ since they are both monolithic (by Proposition \ref{prop-faltchar}) but if we quotient by this unique minimal normal subgroup then we obtain non-isomorphic groups.
\end{proof}
There are many equivalent definitions of the following.
\begin{defn} A group $G$ is residually finite if for each non-trivial element $g\in G$ there exists a finite group $F_g$ and a homomorphism $\phi_g:G\rightarrow F_g$ such that $(g)\phi_g\ne 1$.
\end{defn}
It is the following well known reformulation that shall be of use to us.
\begin{lem} A group $G$ is residually finite if and only if it can be embedded inside the direct product of a family of finite groups. Moreover the family comprises of the finite quotients of $G$.
\end{lem}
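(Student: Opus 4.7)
The plan is to prove both directions separately and then address the \emph{moreover} clause.

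For the forward direction, assume $G$ is residually finite. For each non-trivial $g \in G$, the definition supplies a finite group $F_g$ and a homomorphism $\phi_g : G \to F_g$ with $(g)\phi_g \ne 1$. I would then form the diagonal map
\[
\Phi : G \longrightarrow \prod_{g \in G\setminus\{1\}} F_g, \qquad (x)\Phi := \bigl((x)\phi_g\bigr)_{g \ne 1},
\]
which is a homomorphism since each coordinate is. To see that $\ker \Phi$ is trivial, I take any $x \in \ker \Phi$ with $x \ne 1$; then in particular $(x)\phi_x = 1$, contradicting the choice of $\phi_x$. Hence $\Phi$ is injective.

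For the backward direction, suppose $\iota : G \hookrightarrow \prod_{i \in I} F_i$ is an embedding with each $F_i$ finite. For any non-trivial $g \in G$, the image $(g)\iota$ is non-trivial, so some coordinate projection $\pi_i : \prod_{j \in I} F_j \to F_i$ satisfies $\pi_i\bigl((g)\iota\bigr) \ne 1$. Then $\phi_g := \iota \pi_i : G \to F_i$ is a homomorphism into a finite group sending $g$ to a non-trivial element, which verifies residual finiteness.

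For the \emph{moreover} statement, I would refine the forward construction by replacing each $F_g$ with the finite quotient $G/\ker \phi_g$. Since $\phi_g$ factors as $G \twoheadrightarrow G/\ker \phi_g \hookrightarrow F_g$, the image is a finite quotient of $G$ and the distinguished element still has non-trivial image. Running the diagonal construction with these quotients embeds $G$ into a direct product of finite quotients of $G$.

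I do not anticipate any genuine obstacle here; the argument is entirely formal once the definition of residual finiteness is unpacked. The only point requiring a touch of care is indexing the product in the forward direction so that the kernel computation works (using the non-trivial elements of $G$ as the index set, so that for each potential non-identity element in the kernel we have already built in a homomorphism that detects it).
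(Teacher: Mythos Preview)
Your proof is correct and is the standard textbook argument. Note, however, that the paper does not actually supply a proof of this lemma: it is introduced as ``the following well known reformulation'' and stated without proof, so there is no paper argument to compare against. Your write-up would serve perfectly well as the omitted justification.
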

\begin{cor}\label{cor-residuallyfinite} Let $G$ be an infinite residually finite group, and $X$ be the union of the finite quotients of $G$. Then, using this action, $\langle G, \FAlt(X)\rangle$ has the $R_\infty$ property.
\end{cor}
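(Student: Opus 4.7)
The plan is to verify the hypotheses of Corollary \ref{prop-infinitecycleneeded} for $H:=\langle G,\FAlt(X)\rangle$. First I would note that, by the reformulation lemma just recalled, the residually finite group $G$ embeds into $\prod_{N\trianglelefteq_f G} G/N$, and therefore acts faithfully on the disjoint union $X:=\bigsqcup_{N\trianglelefteq_f G} G/N$ by letting $g\in G$ act on each coset $xN\in G/N$ via $xN\mapsto xgN$. Since $G$ is infinite, it has finite quotients of arbitrarily large order (otherwise $G$ would embed into a single finite group), so $X$ is infinite, and $H$ fully contains $\FAlt(X)$.

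Next I would observe that for any fixed $g\in G$ and any fixed finite quotient $G/N$, the orbit of a point $xN$ under $\langle g\rangle$ is contained in the finite set $G/N$, and hence is finite. Since the orbits of $g$ on $X$ are exactly the orbits on each finite summand $G/N$, every element of (the image of) $G$ in $\Sym(X)$ has only finite cycles, i.e. $\eta_\infty(g)=0$ for all $g$ in the image. Then Remark \ref{finiteorbitsremainfinite} upgrades this from $G$ to the whole group $H=\langle G,\FAlt(X)\rangle$: every element of $H$ still has only finite orbits, i.e. $\eta_\infty(h)=0$ for every $h\in H$.

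Finally, I would apply Corollary \ref{prop-infinitecycleneeded} to $H$. Since $H$ fully contains $\FAlt(X)$ and has no element with an infinite cycle, the corollary immediately yields that $H$ has the $R_\infty$ property, which is the conclusion.

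The only potentially delicate point is ensuring $X$ is infinite and that the action of $G$ on $X$ really is by permutations with no infinite orbit; both reduce to the elementary observations that an infinite residually finite group has arbitrarily large finite quotients and that a group acts on any finite set with only finite orbits. Beyond these, the result is a direct application of machinery already set up in the paper, so there is no substantive obstacle.
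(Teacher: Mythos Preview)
Your proposal is correct and follows essentially the same route as the paper: embed $G$ via residual finiteness so that every element has only finite orbits, invoke Remark~\ref{finiteorbitsremainfinite} to pass this property to $\langle G,\FAlt(X)\rangle$, and then apply Corollary~\ref{prop-infinitecycleneeded}. You supply more detail than the paper (in particular you justify that $X$ is infinite, which the paper leaves implicit), though your parenthetical ``otherwise $G$ would embed into a single finite group'' is a slight overstatement in the non--finitely-generated case---the cleaner argument is simply that either there are infinitely many finite quotients or $G$ embeds into a finite product of them, and either way $X$ is infinite.
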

\begin{proof} Since $G$ is residually finite, it can be embedded inside the direct product of a family of finite groups (which are those groups appearing as finite quotients of $G$). Therefore any element $g \in G$ has only finite orbits, and by Remark \ref{finiteorbitsremainfinite}, any element in $\langle G, \FAlt(X)\rangle$ also only has finite orbits. Hence Corollary \ref{prop-infinitecycleneeded} applies, and $\langle G, \FAlt(X)\rangle$ has the $R_\infty$ property.
\end{proof}

\section{The $R_\infty$ property and commensurable groups}
This final section involves results for commensurable groups.
\begin{not*} Let $N\unlhd_f G$ denote that $N$ is normal and finite index in $G$.
\end{not*}
\begin{defn} Let $G$ and $H$ be groups. We say that $G$ is commensurable to $H$ if and only if there exist $N_G\cong N_H$ with $N_G\unlhd_f G$ and $N_H\unlhd_f H$. Note that if $G$ is finitely generated then so is $N_G$, which then implies that $H$ is finitely generated.
\end{defn}
We will work towards Theorem \ref{thm-Hn} which applies to the Houghton groups, a family of groups $H_n$ indexed over $\N$ where, for each $n \in \N$, $H_n$ acts on a set $X_n$ and $\FSym(X_n)\leqslant H_n\leqslant \Sym(X_n)$. Each group $H_n$ therefore fully contains $\FAlt(X_n)$. These were first introduced in \cite{Houghton}, but we rely heavily on \cite{Cox} where an introduction to these groups can be found and a description, for all $n\ge2$, of the structure of the automorphism group for all finite index subgroups of $H_n$ is given. We start with three well known results.

\begin{lem}\label{lem-findnorm} If $H\leqslant_fG$, then $\exists\;N\leqslant_f H$ which is normal in $G$.
\end{lem}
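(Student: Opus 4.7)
The plan is to use the classical normal core construction. Given $H \leqslant_f G$, I would define
\[N := \bigcap_{g \in G} g^{-1}Hg,\]
the normal core of $H$ in $G$. This is manifestly normal in $G$ (conjugation by any element of $G$ permutes the conjugates of $H$ and hence fixes their intersection), and it is contained in $H$ (taking $g = 1$), so $N \leqslant H$.

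The remaining task is to show $[H : N] < \infty$, for which it suffices to show $[G : N] < \infty$. Here I would use the standard fact that for subgroups $K, L \leqslant_f G$, the intersection $K \cap L$ is also of finite index in $G$ (since $[G : K \cap L] \leqslant [G : K][G : L]$). Now each conjugate $g^{-1}Hg$ has the same finite index in $G$ as $H$, and the number of distinct conjugates of $H$ in $G$ equals $[G : N_G(H)]$, which is at most $[G : H] < \infty$. Therefore $N$ is the intersection of only finitely many finite-index subgroups, so $[G : N] < \infty$ and hence $[H : N] < \infty$.

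Alternatively, and perhaps more cleanly, I would consider the left multiplication action of $G$ on the coset space $G/H$, yielding a homomorphism $\varphi : G \to \Sym(G/H)$. Since $|G/H| = [G:H] < \infty$, the image is contained in a finite symmetric group, so $\ker\varphi$ has finite index in $G$. Moreover $\ker\varphi$ is contained in the stabiliser of the coset $H$ itself, which is $H$, so taking $N = \ker\varphi$ gives a normal subgroup of $G$ contained in $H$ with finite index in $G$, hence in $H$.

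There is no real obstacle here; the only subtlety is confirming that finiteness of $[G:H]$ transfers to finiteness of $[H:N]$, which follows immediately from the tower formula $[G:N] = [G:H][H:N]$ once $[G:N]$ is known to be finite.
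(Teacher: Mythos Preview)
Your proposal is correct and matches the paper's argument: the paper defines $N=\bigcap_{g\in G} g^{-1}Hg$ exactly as in your first approach, then identifies $N$ with the kernel of the right-multiplication action of $G$ on $H\backslash G$ (your second approach) to conclude $[G:N]\le n!$. The only cosmetic difference is that the paper works with right cosets, in keeping with its right-action conventions.
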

\begin{proof} Let $H$ have index $n$ in $G$ and let $N:=\bigcap_{g \in G}(g^{-1}Hg)$. Then $G$ acts on $H\backslash G$ by right multiplication, and so there is a homomorphism $\phi: G\rightarrow S_n$. Now $h \in \ker(\phi)$ if,
\begin{align*}
&Hgh=Hg\;\text{for all}\;g \in G\\
\Leftrightarrow&ghg^{-1}\in H\;\text{for all}\;g \in G\\
\Leftrightarrow&h \in g^{-1}Hg\;\text{for all}\;g \in G.
\end{align*}
Hence $\ker(\phi)=N$ and $N$ is normal. Moreover $\quotient{G}{\ker(\phi)}\cong \textrm{Im}(\phi)\le S_n$, and so $N$ has index $m$ in $G$ where $m\le n!$ and $m$ divides $n!$.
\end{proof}

\begin{lem}\label{lem-findchar} If $H\leqslant_fG$ and $G$ is finitely generated, then $\exists\;K\leqslant_f H$ which is characteristic in $G$.
\end{lem}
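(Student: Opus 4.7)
The plan is to take $K$ to be the intersection of all subgroups of $G$ of index exactly $n := [G:H]$. The essential input is the classical fact that a finitely generated group has only finitely many subgroups of any given finite index; indeed, every index-$n$ subgroup arises as the stabiliser of a point under a transitive action of $G$ on an $n$-element set, which corresponds to a homomorphism $G \to S_n$, and since $G$ is generated by finitely many, say $d$, elements, there are at most $(n!)^d$ such homomorphisms.

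With that fact in hand, I enumerate the index-$n$ subgroups of $G$ as $H = H_1, H_2, \ldots, H_r$ and set $K := \bigcap_{i=1}^r H_i$. Then $K$ is a finite intersection of finite-index subgroups, so $K \leqslant_f G$, and $K \leqslant H$ because $H$ itself appears in the list.

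For characteristicity, any $\phi \in \Aut(G)$ sends a subgroup of index $n$ to a subgroup of index $n$, so $\phi$ permutes the finite set $\{H_1,\ldots,H_r\}$ and therefore fixes their intersection: $\phi(K) = K$.

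There is no real obstacle here: once the finitely-many-index-$n$-subgroups fact is accepted, the rest is a one-line orbit observation for the action of $\Aut(G)$ on subgroups of a given index. The finite generation hypothesis enters precisely at that step, and nowhere else in the argument.
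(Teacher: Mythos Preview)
Your proof is correct and follows essentially the same route as the paper: both establish that a finitely generated group has only finitely many subgroups of a given index (via the homomorphism-to-$S_n$ count), and then intersect to obtain a characteristic finite-index subgroup. The only cosmetic difference is that the paper intersects over the $\Aut(G)$-orbit of $H$ while you intersect over all index-$n$ subgroups; either choice works for the same reason.
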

\begin{proof} Suppose $H\leqslant_nG$. We first show that there exist only finitely many subgroups of $G$ of a given index. As in the previous lemma, right multiplication by $G$ on $H\backslash G$ gives a homomorphism $\phi_H: G\rightarrow S_n$. Note that $\Stab(H)=H$ since $g \in \Stab(H)\Leftrightarrow Hg=H$. Thus, by choosing $1 \in \Z_n$ to correspond to the coset $H$ in $H\backslash G$, the preimage of $\Stab(1)$ in $S_n$ is $H$. Hence $H=H'\Leftrightarrow \phi_H=\phi_{H'}$.
\begin{align*}
\;\text{But}\;G\;\text{finitely generated}\;\Rightarrow \exists\;\text{only finitely many homomorphisms}\; G\rightarrow S_n
\end{align*}
(there are $(n!)^{|S|}$ maps from $S$ to $S_n$) and so there can only be finitely many index $n$ subgroups. Now let
 \begin{align}
K:=\bigcap_{\phi \in \Aut(G)}\!\!\!\!\!(H)\phi\label{factforlemma3}
 \end{align}
and note that, for any $\phi \in \Aut(G)$, $(H)\phi\leqslant_nG$. But there are only finitely many possible images for $H$ in (\ref{factforlemma3}), and so (since the intersection of finitely many subgroups of finite index is of finite index) $K$ is finite index in $G$. Finally, $K$ is characteristic in $G$  since the image of $K$ under $\psi \in \Aut(G)$ is contained within
\begin{align*}
 \bigcap_{\phi \in \Aut(G)}\!\!\!\!\!((H)\phi\psi)
\end{align*}
which is equal to $K$.
\end{proof}

\begin{lem}\cite[Lem 2.2(ii)]{rinfinityextensions}\label{lem-finiteextensions} Let $D$ be a group with the $R_{\infty}$ property and
 $$
1\longrightarrow D \longrightarrow  E \longrightarrow F \longrightarrow 1
 $$
be a short exact sequence of groups. If $D$ is characteristic in $E$ and $F$ is any finite group, then $E$ has the $R_{\infty}$ property.
\end{lem}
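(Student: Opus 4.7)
Fix $\phi \in \Aut(E)$; I want to show $R(\phi) = \infty$. Because $D$ is characteristic, $\phi$ restricts to an automorphism $\phi|_D \in \Aut(D)$ and descends to an automorphism $\bar\phi \in \Aut(F)$ via the quotient map $p : E \to F$. By hypothesis $R(\phi|_D) = \infty$, so $D$ admits infinitely many pairwise distinct $\phi|_D$-twisted conjugacy classes. The strategy is to relate these to the $\phi$-twisted conjugacy classes of $E$ by showing that each $\phi$-class of $E$ can absorb only boundedly many $\phi|_D$-classes of $D$.

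\textbf{Key step.} I claim that each $\phi$-twisted conjugacy class of $E$ meets $D$ in at most $|F|$ $\phi|_D$-twisted conjugacy classes of $D$. Suppose $a, b \in D$ and $(x^{-1})\phi\, a\, x = b$ for some $x \in E$. Reducing this equation modulo $D$, using $\bar a = \bar b = \bar 1$ in $F$, gives $\bar\phi(\bar x)^{-1}\bar x = \bar 1$, i.e.\ $\bar x \in \Fix(\bar\phi) \le F$. Thus every witness $x$ to $\phi$-twisted conjugacy between elements of $D$ must lie in the subgroup $\tilde E := p^{-1}(\Fix(\bar\phi))$, which contains $D$ with index $|\Fix(\bar\phi)| \le |F|$. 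Moreover, if $x_1 = x_2 d$ with $d \in D$, then
\[
 (x_1^{-1})\phi\, a\, x_1 \;=\; (d^{-1})\phi\,\bigl((x_2^{-1})\phi\, a\, x_2\bigr)\, d,
\]
so $x_1$ and $x_2$ produce elements of $D$ that are $\phi|_D$-twisted conjugate in $D$. Hence the number of $\phi|_D$-classes of $D$ contained in a single $\phi$-class of $E$ is at most the index $[\tilde E : D] \le |F|$.

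\textbf{Conclusion.} Every $\phi|_D$-twisted conjugacy class of $D$ is contained in exactly one $\phi$-twisted conjugacy class of $E$, giving a map from the former to the latter whose fibres have size at most $|F|$ by the key step. Since there are infinitely many $\phi|_D$-classes and $|F| < \infty$, there must be infinitely many $\phi$-classes, i.e.\ $R(\phi) = \infty$. As $\phi \in \Aut(E)$ was arbitrary, $E$ has the $R_\infty$ property.

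\textbf{Main obstacle.} The only non-bookkeeping point is the observation that a witness $x$ for twisted conjugacy between elements of $D$ is forced into $p^{-1}(\Fix(\bar\phi))$; once this is in hand, the coset counting in $F$ is routine and the finiteness of $F$ does the rest.
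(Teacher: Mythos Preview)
Your argument is correct. The paper itself does not prove this lemma at all: it is quoted verbatim as \cite[Lem.~2.2(ii)]{rinfinityextensions} and used as a black box, so there is no in-paper proof to compare against. What you have written is essentially the standard proof from the cited reference: restrict $\phi$ to the characteristic subgroup $D$, observe that any $x\in E$ witnessing $\phi$-twisted conjugacy between two elements of $D$ must project into $\Fix(\bar\phi)\le F$, and then count cosets to see that each $\phi$-class in $E$ contains at most $[\,p^{-1}(\Fix(\bar\phi)):D\,]\le |F|$ many $\phi|_D$-classes of $D$. One small point worth making explicit (you use it implicitly): for a coset representative $t_i\in p^{-1}(\Fix(\bar\phi))$ one has $(t_i^{-1})\phi\,a\,t_i\in D$ precisely because $\bar\phi(\bar t_i)=\bar t_i$, so the elements you feed back into the $\phi|_D$-relation really do lie in $D$. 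With that noted, the bookkeeping is airtight.
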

Combining the previous two results provides an easier condition to check in order to show that all commensurable groups have the $R_\infty$ property.
\begin{lem}\label{prop-finiteindexcommensurable}
Let $G$ be a finitely generated group. If $G$ and all finite index subgroups of $G$ have the $R_\infty$ property, then all groups commensurable to $G$ have the $R_\infty$ property.
\end{lem}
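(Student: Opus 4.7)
The plan is to chase subgroups: given any $H$ commensurable to $G$, extract a common finite index subgroup isomorphic to one in $G$, promote it to something characteristic in $H$, and then apply the finite extension lemma (Lemma \ref{lem-finiteextensions}).

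First I would unpack commensurability: there exist $N_G\unlhd_f G$ and $N_H\unlhd_f H$ with $N_G\cong N_H$. Since $G$ is finitely generated and $N_G\leqslant_f G$, the subgroup $N_G$ is finitely generated, so $N_H$ is finitely generated, and therefore $H$ is finitely generated as it contains $N_H$ with finite index. This finite generation of $H$ is exactly what is needed to invoke Lemma \ref{lem-findchar}.

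Next, apply Lemma \ref{lem-findchar} to the pair $(H, N_H)$: there exists $K\leqslant_f N_H$ which is characteristic in $H$. Fix an isomorphism $\alpha: N_H\to N_G$ and set $K':=\alpha(K)$. Then $K'\leqslant_f N_G\leqslant_f G$, so $K'$ is a finite index subgroup of $G$; by hypothesis $K'$ has the $R_\infty$ property, and hence so does its isomorphic copy $K$. Finally, $K$ is characteristic in $H$ with $R_\infty$, and $H/K$ is finite, so Lemma \ref{lem-finiteextensions} (the version for finite quotients) gives that $H$ has the $R_\infty$ property.

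The main conceptual obstacle is recognising that commensurability alone only gives a common \emph{isomorphic} finite index subgroup, not a common characteristic one, which is what Lemma \ref{lem-finiteextensions} requires. The bridge is Lemma \ref{lem-findchar}, whose hypothesis of finite generation is automatically inherited by $H$ from $G$ through the commensurability relation. Once finite generation of $H$ is secured, the rest is a routine diagram chase.
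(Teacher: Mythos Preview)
Your proof is correct and follows essentially the same route as the paper: apply Lemma~\ref{lem-findchar} inside $H$ to produce a characteristic finite index subgroup, identify it (via the commensurability isomorphism) with a finite index subgroup of $G$ so that it inherits $R_\infty$, and then invoke Lemma~\ref{lem-finiteextensions}. Your write-up is in fact more careful than the paper's, since you explicitly verify that $H$ is finitely generated---the hypothesis required for Lemma~\ref{lem-findchar}---whereas the paper leaves this step implicit.
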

\begin{proof} Let $H$ be commensurable to $G$. Then $\exists\;N\unlhd_fG, H$. By Lemma \ref{lem-findchar}, there exists a group $U$ which is characteristic in $H$ and such that $U\leqslant_fG, H$. From our assumption that all finite index subgroups of $G$ have the $R_\infty$ property, $U$ has the $R_\infty$ property. Hence, by Lemma \ref{lem-finiteextensions}, $H$ has the $R_\infty$ property.
\end{proof}

Our final aim is the following. Although it was done independently, our argument has similarities to \cite[First proof of Thm. 3.8]{rinfinityHou2}. Their argument produces elements of different orders, whilst we produce elements of different cycle types. The flexibility that this affords allows our arguments to generalise from $H_n$ to certain subgroups $U_p\le H_n$.

\begin{thm} \label{thm-Hn} Let $n\in \N$. If $G$ is any group commensurable to $H_n$, the $n$\ts{th} Houghton group, then $G$ has the $R_\infty$ property.
\end{thm}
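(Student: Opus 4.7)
The plan is to apply Lemma \ref{prop-finiteindexcommensurable} to reduce the result to showing that $H_n$ itself, together with every finite index subgroup of $H_n$, has the $R_\infty$ property. For $n\ge 2$, $H_n$ is finitely generated (in fact $2$-generated), so the hypothesis of that lemma is satisfied. The case $n=1$, where $H_1=\FSym(\N)$ is not finitely generated, has to be handled separately: here the only finite index subgroups of $H_1$ are $\FSym(\N)$ and $\FAlt(\N)$ (the latter being simple of index $2$), both torsion and monolithic, and a commensurable $G$ inherits a characteristic subgroup isomorphic to $\FAlt(\N)$, whence $R_\infty$ follows from Corollary \ref{cor-torsion} combined with Lemma \ref{lem-finiteextensions}.

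For the main case $n\ge 2$: Proposition \ref{firstresult} already gives that $H_n$ has the $R_\infty$ property, so the core task is to establish the same for every finite index subgroup $U\le H_n$. The first observation is that $\FAlt(X_n)$, being infinite and simple, has no proper subgroups of finite index; hence $\FAlt(X_n)\cap U=\FAlt(X_n)$, so $U$ fully contains $\FAlt(X_n)$. Proposition 1 then applies to $U$: the map $N_{\Sym(X_n)}(U)\to \Aut(U)$ given by conjugation is an isomorphism, and $\FAlt(X_n)$ is characteristic in $U$.

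The second step is to apply Proposition \ref{firstresult} to $U$. This requires that, for every $\rho\in N_{\Sym(X_n)}(U)$, there exist $s\in\N$ with $\eta_s(\rho)<\infty$. Here I would invoke the description of the normaliser from \cite{Cox}: any such $\rho$ acts, outside a finite set, as an eventual translation along each ray of $X_n$ (possibly after a permutation of the rays). As a consequence, only finitely many points of $X_n$ can lie in a finite cycle of $\rho$ of length $s\ge 2$, so $\eta_s(\rho)<\infty$ for every $s\ge 2$. Whether or not some ray has trivial eventual translation (in which case $\eta_1(\rho)=\infty$), the hypothesis of Proposition \ref{firstresult} is met, so $R(\phi_\rho)=\infty$ for every $\rho$, i.e.\ $U$ has the $R_\infty$ property.

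The principal obstacle lies in this second step: one must pin down $N_{\Sym(X_n)}(U)$ accurately enough to see that every such element is an eventual translation on the rays. This structural input is not developed in the present paper and is instead imported from \cite{Cox}. Once granted, the cycle-type bookkeeping already used for $H_n$ in Proposition \ref{firstresult} transfers uniformly to the whole family of finite index subgroups $U$, which is exactly the flexibility highlighted in the remark immediately preceding the theorem.
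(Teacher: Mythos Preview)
Your overall architecture matches the paper's: separate out $n=1$, and for $n\ge 2$ invoke Lemma \ref{prop-finiteindexcommensurable} to reduce to showing every finite index $U\le H_n$ has the $R_\infty$ property, then identify $\Aut(U)$ with $N_{\Sym(X_n)}(U)$ and apply Proposition \ref{firstresult}. The $n=1$ sketch is essentially the paper's argument.

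There is, however, a concrete error in your cycle-type analysis for $n\ge 2$. You assert that because $\rho\in N_{\Sym(X_n)}(U)$ acts as an eventual translation on each ray after a permutation of the rays, ``only finitely many points of $X_n$ can lie in a finite cycle of $\rho$ of length $s\ge 2$'', and hence $\eta_s(\rho)<\infty$ for every $s\ge 2$. This is false. Take $n\ge 2$, $U=H_n$, and let $\rho$ be the involution swapping rays $1$ and $2$ pointwise (so $(1,m)\leftrightarrow(2,m)$ for all $m$); then $\rho\in H_n\rtimes S_n=N_{\Sym(X_n)}(H_n)$ and $\eta_2(\rho)=\infty$. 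More generally, whenever the ray-permutation component of $\rho$ contains an $r$-cycle and the net translation around that cycle is zero, one gets $\eta_r(\rho)=\infty$.

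The paper avoids this by claiming finiteness of $\eta_s(\rho)$ not for all $s\ge 2$ but only for \emph{some} $s$, with an explicit bound: it passes through the characteristic subgroups $U_m$ of \cite{Hou2} and the cycle-type-preserving monomorphism $\mu\colon N_{\Sym(X_n)}(U_m)\hookrightarrow N_{\Sym(X_{nm})}(H_{nm})=H_{nm}\rtimes S_{nm}$ from \cite{Cox}, and then observes that in $H_{k}\rtimes S_{k}$ one has $\eta_r<\infty$ for every $r>k$; transported back, this gives $\eta_r(\rho)<\infty$ for all $r>nm$, which is exactly what Proposition \ref{firstresult} needs. Your argument is repaired once ``for every $s\ge 2$'' is replaced by ``for all sufficiently large $s$'' and justified via this embedding. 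Note also that for general $U\le_f H_n$ the normaliser $N_{\Sym(X_n)}(U)$ can be strictly larger than $H_n\rtimes S_n$, so the detour through $U_m$ and $\mu$ is not merely cosmetic but is the mechanism that actually pins down the cycle structure.
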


\begin{proof} We first work with $\FAlt$. If $G$ is commensurable to $\FAlt(X)$, then there exists $N\unlhd_f\FAlt(X), G$. Now, since $\FAlt(X)$ is simple and infinite, $N=\FAlt(X)$. Hence we have the short exact sequence
\[1\longrightarrow \FAlt(X) \longrightarrow  G \longrightarrow F \longrightarrow 1\]
where $F$ is some finite group. Let $\phi \in \Aut(G)$ and consider $\FAlt(X)\cap (\FAlt(X))\phi$. This has finite index in $\FAlt(X)$. Using Lemma \ref{lem-findnorm} and that $\FAlt(X)$ is simple, we have $(\FAlt(X))\phi=\FAlt(X)$ i.e.\ that $\FAlt(X)$ is characteristic in $G$. Since $\FAlt(X)$ is torsion, Corollary \ref{cor-torsion} states that it has the $R_\infty$ property. Hence Lemma \ref{lem-finiteextensions} applies to $G$ implying that $G$ has the $R_\infty$ property.

We now work with $n\ge 2$. From Lemma \ref{prop-finiteindexcommensurable}, it is sufficient to show that, for any $n\ge 2$, all finite index subgroups of $H_n$ have the $R_\infty$ property.

Fix an $n\ge2$. There are a family of finite index, characteristic subgroups of $H_n$ defined in \cite{Hou2} and denoted $U_p$ where $p \in \N$. They showed that, for any $U\le_fH_n$, there exists a $p\in \N$ such that $U_p\le_fU$. This was strengthened in \cite[Prop. 5.12]{Cox} by showing that, for any $U\leqslant_fH_n$, there exists an $m \in \N$ such that $U_m\le_fU$ and
\[\Aut(U)\hspace{0.1cm} _\Psi\hspace{-0.1cm}\cong N_{\Sym(X_n)}(U)\leqslant N_{\Sym(X_n)}(U_m)\cong_\Psi \Aut(U_m)\]
where $\Psi: N_{\Sym(X_n)}(G)\mapsto \Aut(G)$ is defined by $(g)\Psi=\phi_g$. Furthermore, by \cite[Lem. 5.9]{Cox}, there is a monomorphism $\mu: N_{\Sym(X_n)}(U_m)\hookrightarrow N_{\Sym(X_{nm})}(H_{nm})$ and, for any $k\ge2$, $N_{\Sym(X_k)}(H_k)=H_k\rtimes S_k$. Importantly, this monomorphism preserves cycle type. We shall apply Proposition \ref{firstresult} to show that any group with automorphism group contained within $N_{\Sym(X_k)}(H_k)$ for some $k\ge2$ has the $R_\infty$ property.

Fix a $k\ge2$. Notice that for all $r \in \N\setminus\{1\}$ and for all $g \in H_k$, $\eta_r(g)$ is finite. Given a $\rho \in H_k\rtimes S_k$, which is isomorphic to $\Aut(H_k)$ via the map $\rho\mapsto \phi_\rho$, we have that $\eta_r(\rho)$ is infinite if and only if $\rho$ induces a cyclic permutation of $r$ branches of $X_k$. Thus, for all $\rho \in N_{\Sym(X_k)}(H_k)$ and all $r>k$ we have that $\eta_r(\rho)$ is finite. Now, for any $U\leqslant_fH_n$, there exists an $m\in \N$ such that $N_{\Sym(X_n)}(U)\leqslant N_{\Sym(X_n)}(U_m)$. Consider if $\rho \in N_{\Sym(X_n)}(U_m)$. Using the above homomorphism $\mu: N_{\Sym(X_n)}(U_m)\rightarrow N_{\Sym(X_{nm})}(H_{nm})$, we have that $\eta_r((\rho)\mu)$ is finite for all $r>nm$. Since $\mu$ preserves cycle type, $\eta_r(\rho)$ is also finite for all $r>nm$. Hence, by Proposition \ref{firstresult}, $R(\phi_\rho)=\infty$ and so all automorphisms of $U$ have infinite Reidemeister number. Thus all finite index subgroups of $H_n$ have the $R_\infty$ property and so Lemma \ref{prop-finiteindexcommensurable} yields the result.
\end{proof}

\def\cprime{$'$}
\providecommand{\bysame}{\leavevmode\hbox to3em{\hrulefill}\thinspace}
\providecommand{\MR}{\relax\ifhmode\unskip\space\fi MR }
% \MRhref is called by the amsart/book/proc definition of \MR.
\providecommand{\MRhref}[2]{%
  \href{http://www.ams.org/mathscinet-getitem?mr=#1}{#2}
}
\providecommand{\href}[2]{#2}

\end{document}